\xpatchcmd{\proof}{\itshape}{\prooflabelfont}{}{}
\newcommand{\prooflabelfont}{\bfseries}
\DeclareMathOperator{\Gr}{gr}
\DeclareMathOperator{\Spec}{Spec}
\DeclareMathOperator{\E}{e}
\DeclareMathOperator{\Index}{index}
\DeclareMathOperator{\Tr}{tr}
\DeclareMathOperator{\Ulr}{ulr}
\DeclareMathOperator{\Eli}{eli}
\DeclareMathOperator{\Gll}{g\ell\ell}
\DeclareMathOperator{\Height}{height}
\newtheorem{theorem}{Theorem}[section]
\newtheorem{proposition}[theorem]{Proposition}
\newtheorem{lemma}[theorem]{Lemma}
\newtheorem{corollary}[theorem]{Corollary}
\theoremstyle{definition}
\newtheorem{example}[theorem]{Example}
\theoremstyle{definition}
\newtheorem{definition}[theorem]{Definition}
\newtheorem{remark}[theorem]{Remark}
\newtheorem{question}[theorem]{Question}
\numberwithin{equation}{section}
\theoremstyle{definition}
\begin{document}
\date{\today}

\title[The index of a numerical semigroup ring]{The index of a numerical semigroup ring}

\author[Richard F. Bartels]{Richard F. Bartels}

\address{Department of Mathematics, Trinity College, 300 Summit St, Hartford, CT, 06106}

\email{rbartels@trincoll.edu}

\urladdr{https://sites.google.com/view/richard-bartels-math/home}

\subjclass[2020]{13A30, 13B30,  13C13, 13C14, 13C15, 13D40, 13E05, 13E15, 13H05, 13H10, 13H15.}

\keywords{Auslander's delta invariant, index, generalized Loewy length, Elias ideals, Burch ideals, Ulrich ideals, numerical semigroup ring, Cohen-Macaulay, Gorenstein}

\title[Some properties of ideals in Cohen-Macaulay local rings]{Some properties of ideals in Cohen-Macaulay local rings}
\begin{abstract}
For a Cohen-Macaulay local ring $(R,\mathfrak{m})$ with canonical module, we study how relations between $\Index(R)$ and $\Gll(R)$ and between $\Index(R)$ and $e(R)$ are preserved when factoring out regular sequences and localizing at prime ideals. We then give conditions for when ideals in a one-dimensional Cohen-Macaulay local ring are Elias and Burch, and use these conditions to study the relationship between Elias, Burch, and Ulrich ideals.  
\end{abstract}
\maketitle
\large{
 \section{Introduction}\text{}
 Let $(R,\mathfrak{m},k)$ be a Cohen-Macaulay local ring. For a finitely-generated $R$-module $M$, Auslander's $\delta$-invariant, denoted $\delta_{R}(M)$, is the smallest non-negative integer $n$ such that there exists a surjective $R$-map $X \oplus R^{n} \longrightarrow M$, where $X$ is a maximal Cohen-Macaulay $R$-module with no free direct summand. For each $n \geq 1$, we have
 \[
 0 \leq \delta_{R}(R/\mathfrak{m}^{n}) \leq \delta_{R}(R/\mathfrak{m}^{n+1}) \leq 1
 \] [15, Corollary 11.28].
 The smallest positive integer $n$ for which $\delta_{R}(R/\mathfrak{m}^{n})=1$ is the following numerical invariant defined by Auslander.
\[ \text{index}(R):=\text{inf}\{n \geq 1\,|\,\delta_{R}(R/\mathfrak{m}^{n})=1\}\]

Suppose $R$ has a canonical module $\omega$. The {\it{trace}} of $\omega$ in $R$, denoted $\Tr_{R}\omega$, is the ideal of $R$ generated by all $R$-homomorphic images of $\omega$ in $R$. Ding proved that if $R$ is a Cohen-Macaulay local ring with canonical module such that $\mathfrak{m} \subseteq \Tr_{R}\omega$, then $\text{index}(R)$ is finite and bounded above by the Hilbert-Samuel multiplcity of $R$, denoted $e(R)$, and also the generalized Loewy length of $R$, denoted $\Gll(R)$ \,[9, Propositions 2.3 and 2.4]. The generalized Loewy length of a local ring $(R,\mathfrak{m})$ is the smallest positive integer $n$ for which $\mathfrak{m}^{n}$ is contained in the ideal generated by a system of parameters of $R$.
\newline

In this paper, we study how the inequalities $\Index(R) \leq e(R)$ and $\Index(R) \leq \Gll(R)$ are preserved when factoring out regular sequences and localizing at prime ideals. We prove that if ${\bf{x}}=x_1,...,x_r\in \mathfrak{m}$ is a regular sequence and $\mathfrak{m} \subseteq \Tr_{R}\omega$, then $\Index(\overline{R}) \leq e(\overline{R})$ and $\Index(\overline{R}) \leq \Gll(\overline{R})$, where $\overline{R}=R/{\bf{x}}R$. We also prove that if $\mathfrak{p} \in \Spec(R)$ is a prime ideal such that $\mathfrak{p}R_{\mathfrak{p}} \subseteq (\Tr_{R}\omega)R_{\mathfrak{p}}$, then $\Index(R_{\mathfrak{p}}) \leq e(R_{\mathfrak{p}})$ and $\Index(R_{\mathfrak{p}}) \leq \Gll(R_{\mathfrak{p}})$. \newline 

For all prime ideals $\mathfrak{p}$ that are analytically unramified, we obtain the bound $\Index(R_{\mathfrak{p}}) \leq e(R)$. If $R$ is an abstract hypersurface, or $R$ is a hypersurface with infinite residue field, this implies $\Index(R_{\mathfrak{p}}) \leq \Index(R)$ for all analytically unramified prime ideals $\mathfrak{p} \in \Spec(R)$. Using these inequalities, we prove conditions for when the equalities $\Index(R)=e(R)$ and $\Index(R)=\Gll(R)$ are preserved when factoring out regular sequences.\newline

Let $(R,\mathfrak{m})$ be a one-dimensional Cohen-Macaulay local ring with $s=\Index(R)$ \,finite. In [1, Theorem 2.3], we proved that if $R$ contains a nonzerodivisor $x \in \mathfrak{m}^{t} \setminus \mathfrak{m}^{t+1}$ such that the induced map 
\[\overline{x}:\mathfrak{m}^{i-1}/\mathfrak{m}^{i} \longrightarrow \mathfrak{m}^{i+t-1}/\mathfrak{m}^{i+t}\]
is injective for $1 \leq i \leq s$, then \[\Gll(R) \leq \Index(R)+t-1.\]\text{}\newline
We generalize this result to symbolic powers of height one prime ideals in Cohen-Macaulay local rings of arbitrary positive dimension. Let $(R,\mathfrak{m})$ be a Cohen-Macaulay local ring and let $\mathfrak{p} \in \Spec(R)$ be a height one prime ideal such that $s=\Index(R_{\mathfrak{p}})$ is finite. In Proposition 2.22, we prove that if $x \in \mathfrak{p}^{(t)} \setminus \mathfrak{p}^{(t+1)}$ is a nonzerodivisor such that the induced map
\[\overline{x}:\mathfrak{p}^{(i-1)}/\mathfrak{p}^{(i)} \longrightarrow \mathfrak{p}^{(i+t-1)}/\mathfrak{p}^{(i+t)}\]
is injective for $1 \leq i \leq s$, then \[\Gll(R_{\mathfrak{p}}) \leq \Index(R_{\mathfrak{p}})+t-1.\]\text{}

In section 3, we study the relationship between several classes of ideals in local rings of positive depth. We first consider a class of $\mathfrak{m}$-primary ideals in one-dimensional Cohen-Macaulay local rings $(R,\mathfrak{m},k)$, called Elias ideals. Elias ideals are defined by Dao in [4] as follows. For an arbitrary $\mathfrak{m}$-primary ideal $I$ of $R$, we have $\text{type}_{R}(R/I) \leq \text{type}_{R}(I)$, where the type of an $R$-module $M$ is given by 
\[
\text{type}_{R}(M)=\text{dim}_{k}\text{Ext}_{R}^{\text{dim}(M)}(k,M).
\] If $\text{type}_{R}(R/I)=\text{type}_{R}(I)$, we say that $I$ is Elias [4, Definition 1.1]. Dao gives several characterizations of Elias ideals and their main properties in [4, Theorem 1.2 and Corollary 1.3]. Notably, an $\mathfrak{m}$-primary ideal $I$ in a one-dimensional Gorenstein local ring $R$ is Elias if and only if $\delta_{R}(R/I)=1$ [4, Proposition 4.1].\newline

Using techniques from De Stefani's proof of [7, Proposition 2.10], we prove in Proposition 3.11 that an $\mathfrak{m}$-primary ideal $I$ in a one-dimensional Cohen-Macaulay local ring $(R,\mathfrak{m})$ is Elias if $x \in \mathfrak{m}(xR:_{Q}I)$ for some nonzerodivisor $x \in \mathfrak{m}$. Here, $Q$ denotes the total ring of fractions of $R$. In [4], Dao proves that this condition characterizes Elias ideals for one-dimensional Gorenstein local rings [4, Theorem 1.2 (7)]. We give an example to show that this condition does not characterize Elias ideals for one-dimensional Cohen-Macaulay local rings.\newline

We give a natural corollary to Proposition 3.11 in Corollary 3.15, showing that if $I$ and $J$ are ideals of one-dimensional Cohen-Macaulay local ring $(R,\mathfrak{m})$ with $I$ an $\mathfrak{m}$-primary ideal, and if $IJ \subseteq xR$ and $x \in J\mathfrak{m}$ for some nonzerodivisor $x \in \mathfrak{m}$, then $I$ is Elias.\newline

We also prove conditions for when ideals in a local ring of positive depth are Burch. An ideal $I$ of local ring $(R,\mathfrak{m})$ is Burch if $\mathfrak{m}I \neq \mathfrak{m}(I:_R \mathfrak{m})$ [6, Definition 2.1]. Burch ideals and Burch modules have been studied extensively-see [5], [6], [8], and [12].\newline

After proving conditions for when Elias and Ulrich ideals are Burch, we prove in Proposition 3.22 a result that builds on [5, Proposition 3.7] and allows us to further study the relationship between Elias, Burch, and Ulrich ideals. We prove that if $(R,\mathfrak{m})$ is a local ring with positive depth such that $J\mathfrak{m} \not\subseteq I\mathfrak{m}$ and $IJ \subseteq xR$ for some nonzerodivisor $x \in \mathfrak{m}$, then $I$ is Burch.\newline
\section{\large{Invariants of Cohen-Macaulay local rings}}\text{}
Let $(R,\mathfrak{m},k)$ be a Cohen-Macaulay local ring. In this section, we study how invariants of $R$, and relations between these invariants, are preserved when factoring out regular sequences and localizing at prime ideals. First, assuming $R$ has a canonical module $\omega$, we use the trace of $\omega$ to describe how established inequalities for $\Index(R)$ and $e(R)$ and for $\Index(R)$ and $\Gll(R)$ are preserved for such quotients and localizations. 
\smallskip

We then give conditions for when equality of $\Index(R)$ and $e(R)$ implies the equalities $\Index(R)=\Index(\overline{R})=e(\overline{R})$, and for when equality of $\Index(R)$ and $\Gll(R)$ implies the equalities  $\Index(R)=\Index(\overline{R})=\Gll(\overline{R})$, where ${\bf{x}}=x_1,...,x_r \in \mathfrak{m}$ is a regular sequence and $\overline{R}=R/{\bf{x}}R$. Finally, we generalize [1, Theorem 2.3 and Corollary 2.5] to symbolic powers of height one prime ideals in Cohen-Macaulay local rings of arbitrary positive dimension. \newline

Throughout this section, $(R,\mathfrak{m},k)$ is a (Noetherian) local ring.\smallskip
\begin{definition}
Let $M$ be an $R$-module. The {\it{trace}} of $R$ in $M$, denoted $\Tr_{R}M$, is the ideal of $R$ generated by all $R$-homomorphic images of $M$ in $R$.
\end{definition}\smallskip
Herzog, Hibi, and Stamate showed that the trace of a module is well-behaved under base change.\smallskip
\begin{lemma}[13, Lemma 1.5]
Let $R$ and $S$ be rings. Let $\varphi:R \longrightarrow S$ be a ring map and $M$ an $R$-module. Then \[(\Tr_{R}M)S \subseteq \Tr_{S}(M \otimes_{R}S).\]
\end{lemma}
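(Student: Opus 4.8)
The plan is to reduce the inclusion to a statement about generators and then realize each generator of $(\Tr_R M)S$ as lying in the image of a base-changed homomorphism. Recall that, by definition, $\Tr_R M = \sum_{f \in \Hom_R(M,R)} f(M)$ is the ideal of $R$ generated by the elements $f(m)$ with $f \in \Hom_R(M,R)$ and $m \in M$, and similarly $\Tr_S(M\otimes_R S)$ is the ideal of $S$ generated by the elements $g(\xi)$ with $g \in \Hom_S(M\otimes_R S, S)$ and $\xi \in M \otimes_R S$. Extending along $\varphi$, the ideal $(\Tr_R M)S$ is generated over $S$ by the images $\varphi(f(m))$. Since $\Tr_S(M\otimes_R S)$ is an ideal of $S$, it therefore suffices to show that $\varphi(f(m)) \in \Tr_S(M\otimes_R S)$ for every $f \in \Hom_R(M,R)$ and every $m \in M$.

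The key step is to base change a fixed homomorphism $f \in \Hom_R(M,R)$ along $\varphi$. Applying $-\otimes_R S$ to $f$ yields an $S$-linear map $f \otimes_R \mathrm{id}_S : M \otimes_R S \to R \otimes_R S$, and composing with the canonical $S$-isomorphism $R \otimes_R S \cong S$ given by $r \otimes s \mapsto \varphi(r)s$ produces an $S$-homomorphism $g_f : M \otimes_R S \to S$. Tracing an elementary tensor through this composite gives $g_f(m \otimes 1) = \varphi(f(m))$. Hence $\varphi(f(m)) = g_f(m \otimes 1)$ lies in the image of the $S$-homomorphism $g_f$, and is thus one of the elements generating $\Tr_S(M\otimes_R S)$.

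Combining the two steps, every generator $\varphi(f(m))$ of $(\Tr_R M)S$ lies in $\Tr_S(M\otimes_R S)$, and since the latter is an ideal, the inclusion $(\Tr_R M)S \subseteq \Tr_S(M\otimes_R S)$ follows. I do not anticipate a genuine obstacle here beyond the bookkeeping: the content is simply the functoriality of $-\otimes_R S$ on $\Hom$-sets together with the identification $R \otimes_R S \cong S$. The one point worth checking carefully is that this canonical map is $S$-linear, so that $g_f$ is an honest $S$-homomorphism whose image is admissible in the definition of $\Tr_S(M \otimes_R S)$; note also that we only need the image of each individual $g_f$, so no surjectivity of the natural map $\Hom_R(M,R)\otimes_R S \to \Hom_S(M\otimes_R S, S)$ is required, which is why the inclusion can fail to be an equality in general.
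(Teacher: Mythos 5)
Your proof is correct: reducing to generators $\varphi(f(m))$ and base changing each $f \in \Hom_R(M,R)$ along $\varphi$ via $f \otimes_R \mathrm{id}_S$ composed with the canonical $S$-isomorphism $R \otimes_R S \cong S$ is exactly the standard argument, and your closing remark about why the inclusion can be strict is also accurate. Note that the paper itself offers no proof of this lemma---it is quoted from [13, Lemma 1.5] (Herzog--Hibi--Stamate)---so there is no in-paper argument to diverge from; your proof is the same base-change argument given in that reference.
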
\smallskip
Using this lemma, we prove corollaries to the following proposition for quotients and localizations of Cohen-Macaulay local rings. In what follows, $e(R)$ denotes the Hilbert-Samuel multiplicity of $R$.
\begin{proposition}[9, Propositions 2.4]
Let $(R,\mathfrak{m})$ be a Cohen-Macaulay local ring with canonical module $\omega$. If $\mathfrak{m}\subseteq \Tr_{R}\omega$, then 
\[
\Index(R) \leq \Gll(R)
\] and 
\[
\Index(R) \leq \E(R).
\]
\end{proposition}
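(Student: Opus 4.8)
The plan is to reduce both inequalities to a single statement about parameter ideals and then to isolate the one place where the hypothesis $\mathfrak{m} \subseteq \Tr_{R}\omega$ is genuinely needed. Write $d = \dim R$. Since $\delta_{R}(R/\mathfrak{m}^{n}) \leq 1$ for every $n$ and $\Index(R)$ is by definition the least $n$ with $\delta_{R}(R/\mathfrak{m}^{n}) = 1$, it suffices, for each of the two target values $n = \Gll(R)$ and $n = \E(R)$, to exhibit a single $n$ with $\delta_{R}(R/\mathfrak{m}^{n}) = 1$. I would use the elementary monotonicity of $\delta$ under surjections: if $M \twoheadrightarrow N$ then $\delta_{R}(M) \geq \delta_{R}(N)$, since composing a surjection $X \oplus R^{a} \twoheadrightarrow M$ realizing $a = \delta_{R}(M)$ with $M \twoheadrightarrow N$ shows $\delta_{R}(N) \leq a$.

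The core is the following claim: if $\mathfrak{q} = (x_{1}, \ldots, x_{d})$ is a parameter ideal and $\mathfrak{m} \subseteq \Tr_{R}\omega$, then $\delta_{R}(R/\mathfrak{q}) = 1$. Granting this, the bound $\Index(R) \leq \Gll(R)$ is immediate: for $n = \Gll(R)$ we have $\mathfrak{m}^{n} \subseteq \mathfrak{q}$ for some parameter ideal $\mathfrak{q}$, so the surjection $R/\mathfrak{m}^{n} \twoheadrightarrow R/\mathfrak{q}$ forces $1 \geq \delta_{R}(R/\mathfrak{m}^{n}) \geq \delta_{R}(R/\mathfrak{q}) = 1$, whence $\Index(R) \leq n$.

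For $\Index(R) \leq \E(R)$ I would first dispatch the purely multiplicity-theoretic inequality $\Gll(R) \leq \E(R)$, which needs no hypothesis on the trace. After passing to the faithfully flat extension $R[t]_{\mathfrak{m}R[t]}$ to arrange an infinite residue field, an operation that preserves $\Index$, $\Gll$, and $\E$, and preserves the inclusion $\mathfrak{m} \subseteq \Tr_{R}\omega$ via the cited base-change Lemma applied to $\omega$, I would choose a minimal reduction $\mathfrak{q}$ of $\mathfrak{m}$. Then $\mathfrak{q}$ is a parameter ideal with $\ell_{R}(R/\mathfrak{q}) = \E(\mathfrak{q}) = \E(\mathfrak{m}) = \E(R)$, and since the descending chain $\mathfrak{m}^{i}(R/\mathfrak{q})$ in the Artinian local ring $R/\mathfrak{q}$ strictly decreases until it vanishes, it must vanish after at most $\E(R)$ steps; hence $\mathfrak{m}^{\E(R)} \subseteq \mathfrak{q}$ and $\Gll(R) \leq \E(R)$. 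Combining with the first bound gives $\Index(R) \leq \Gll(R) \leq \E(R)$.

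The main obstacle is the core claim, and precisely its lower bound $\delta_{R}(R/\mathfrak{q}) \geq 1$; the upper bound $\delta_{R}(R/\mathfrak{q}) \leq 1$ is free from the surjection $R \twoheadrightarrow R/\mathfrak{q}$. Showing $\delta_{R}(R/\mathfrak{q}) \geq 1$ amounts to ruling out a surjection $\pi \colon X \twoheadrightarrow R/\mathfrak{q}$ from a maximal Cohen-Macaulay module $X$ having no free summand, and this is exactly where $\mathfrak{m} \subseteq \Tr_{R}\omega$ must enter. My plan would be to dualize $\pi$ into the canonical module $\omega$ and to exploit that $\Tr_{R}\omega \supseteq \mathfrak{m}$ means the evaluation map $\omega \otimes_{R} \Hom_{R}(\omega, R) \to R$ meets $\mathfrak{m}$: this produces homomorphisms $\omega \to R$ whose images detect the socle of $R/\mathfrak{q}$, forcing any maximal Cohen-Macaulay module mapping onto $R/\mathfrak{q}$ to split off a free summand and thereby contradicting the assumption on $X$. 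Making this socle-splitting argument precise, i.e. converting the near-Gorenstein condition $\mathfrak{m} \subseteq \Tr_{R}\omega$ into the existence of a free direct summand in the minimal maximal Cohen-Macaulay approximation of $R/\mathfrak{q}$, is the crux and the step I expect to require the most care.
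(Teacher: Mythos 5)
Your reductions are all correct, and they recover the skeleton of the argument this paper inherits from Ding: monotonicity of $\delta_R$ under surjections, the derivation of $\Index(R)\leq\Gll(R)$ from the core claim that $\delta_R(R/\mathfrak{q})=1$ for every parameter ideal $\mathfrak{q}$, and the derivation of the multiplicity bound from a minimal reduction $\mathfrak{q}$ of the maximal ideal using $\ell(R/\mathfrak{q})=\E(R)$ in the Cohen--Macaulay case. But note that the paper quotes this proposition from Ding [9] rather than proving it, and the entire mathematical content lies in exactly the step you defer: showing $\delta_R(R/\mathfrak{q})\geq 1$, i.e., that no maximal Cohen--Macaulay module $X$ without free summand surjects onto $R/\mathfrak{q}$. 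Your sketch for this step --- dualize $X\twoheadrightarrow R/\mathfrak{q}$ into $\omega$ and let homomorphisms $\omega\to R$ whose images generate $\mathfrak{m}$ ``detect the socle'' and split a free summand off $X$ --- is not an argument: you never produce a surjection $X\to R$, and no one-step argument of this shape is known. What actually makes the claim true is a descent mechanism, recorded in this very paper as Lemmas 2.8 and 2.9 (both extracted from [9]): if $x\in\Tr_R\omega$ is a nonzerodivisor and $M$ is an indecomposable maximal Cohen--Macaulay module with $M\ncong R$, then $M/xM$ has no free $R/xR$-summand; moreover the hypothesis $\mathfrak{m}\subseteq\Tr_R\omega$ passes to $R/xR$ by the base-change Lemma 2.2 (as in Corollary 2.4). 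One then kills the generators of $\mathfrak{q}$ one at a time --- each lies in $\mathfrak{m}\subseteq\Tr_R\omega$ --- so a supposed surjection $X\twoheadrightarrow R/\mathfrak{q}$ with $X$ free-summand-free descends to a surjection $\overline{X}\twoheadrightarrow\overline{R}$ over the Artinian ring $\overline{R}=R/\mathfrak{q}$ with $\overline{X}$ still free-summand-free; this is absurd, since any surjection onto the free module $\overline{R}$ splits. Without this lemma or a genuine substitute, your proposal does not prove the proposition; with it, the rest is the routine part.

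There is a secondary gap: you assert that $R\to R'=R[t]_{\mathfrak{m}R[t]}$ preserves $\Index$, $\Gll$, and $\E$. Multiplicity is standard, but preservation of $\Index$ and $\Gll$ under this extension is genuinely delicate rather than a known triviality. Your chain only needs the inequality $\Index(R)\leq\Index(R')$ (note also that your displayed conclusion ``$\Gll(R)\leq\E(R)$'' is really $\Gll(R')\leq\E(R')$, since your minimal reduction lives in $R'$), and that direction can be supplied: for finitely generated $X$, flatness gives $\Hom_R(X,R)\otimes_R R'\cong\Hom_{R'}(X\otimes_R R',R')$, hence $\Tr_{R'}(X\otimes_R R')=(\Tr_R X)R'$; since over a local ring $X$ has a free summand if and only if $\Tr_R X=R$, a free-summand-free maximal Cohen--Macaulay module stays free-summand-free after base change, which yields $\delta_{R'}(M\otimes_R R')\leq\delta_R(M)$ and so $\Index(R)\leq\Index(R')$. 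As written, however, this is an unproved assertion, and the reverse inequality (which you fortunately do not need) is a subtle point in the literature.
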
\smallskip
\begin{corollary}
Let $(R,\mathfrak{m})$ be a Cohen-Macaulay local ring with canonical module $\omega$. Let ${\bf{x}}=x_1,...,x_r \in \mathfrak{m}$ be a regular sequence and $\overline{R}=R/{\bf{x}}R$. If $\mathfrak{m} \subseteq \Tr_{R}\omega$, then \newline
\[
\Index(\overline{R}) \leq \Gll(\overline{R})
\] and 
\[
\Index(\overline{R}) \leq \E(\overline{R}).
\]
\end{corollary}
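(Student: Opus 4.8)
The plan is to reduce directly to Proposition 2.3 by verifying its three hypotheses for the quotient ring $\overline{R}$. First, since $\mathbf{x}=x_1,\dots,x_r$ is an $R$-regular sequence and $R$ is Cohen-Macaulay, the ring $\overline{R}=R/\mathbf{x}R$ is again Cohen-Macaulay local, with maximal ideal $\overline{\mathfrak{m}}=\mathfrak{m}\overline{R}$. Because the canonical module $\omega$ is a maximal Cohen-Macaulay $R$-module, the sequence $\mathbf{x}$ is also $\omega$-regular, and the standard behavior of canonical modules under reduction by a regular sequence (see Bruns--Herzog) identifies the canonical module of $\overline{R}$ as $\overline{\omega}=\omega/\mathbf{x}\omega=\omega\otimes_{R}\overline{R}$. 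Thus $\overline{R}$ is a Cohen-Macaulay local ring with canonical module, and it remains only to establish the trace condition $\overline{\mathfrak{m}}\subseteq\Tr_{\overline{R}}\overline{\omega}$.

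This last containment is where Lemma 2.2 does the work. Applying it to the quotient map $\varphi\colon R\to\overline{R}$ and the module $M=\omega$ gives
\[
(\Tr_{R}\omega)\,\overline{R}\subseteq\Tr_{\overline{R}}(\omega\otimes_{R}\overline{R})=\Tr_{\overline{R}}\overline{\omega}.
\]
On the other hand, the hypothesis $\mathfrak{m}\subseteq\Tr_{R}\omega$ extends through $\varphi$: passing to images in $\overline{R}$ yields $\overline{\mathfrak{m}}=\mathfrak{m}\overline{R}\subseteq(\Tr_{R}\omega)\overline{R}$. Combining the two inclusions gives $\overline{\mathfrak{m}}\subseteq\Tr_{\overline{R}}\overline{\omega}$, which is precisely the trace condition for $\overline{R}$.

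With every hypothesis of Proposition 2.3 now verified for $\overline{R}$ — namely that $\overline{R}$ is Cohen-Macaulay, possesses canonical module $\overline{\omega}$, and satisfies $\overline{\mathfrak{m}}\subseteq\Tr_{\overline{R}}\overline{\omega}$ — applying that proposition to $\overline{R}$ immediately produces $\Index(\overline{R})\leq\Gll(\overline{R})$ and $\Index(\overline{R})\leq\E(\overline{R})$, as claimed.

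I expect the only genuinely delicate point to be the identification $\overline{\omega}\cong\omega\otimes_{R}\overline{R}$, since Lemma 2.2 is phrased in terms of the base-changed module $M\otimes_{R}S$ rather than the canonical module of $S$ directly. One must invoke that $\mathbf{x}$ is $\omega$-regular, so that reduction modulo $\mathbf{x}$ simultaneously computes $\omega\otimes_{R}\overline{R}$ without higher Tor and yields the canonical module of $\overline{R}$; once this is in hand, everything else is a formal chase through the two displayed inclusions.
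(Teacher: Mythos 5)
Your proof is correct and follows the same route as the paper: apply Lemma 2.2 with $S=\overline{R}$ and $M=\omega$ to get $\overline{\mathfrak{m}}\subseteq(\Tr_{R}\omega)\overline{R}\subseteq\Tr_{\overline{R}}(\omega\otimes_{R}\overline{R})$, identify $\omega\otimes_{R}\overline{R}$ as the canonical module of $\overline{R}$, and invoke Proposition 2.3. The paper simply asserts the canonical-module identification that you flag as the delicate point and justify via the standard Bruns--Herzog fact, so your write-up is, if anything, slightly more complete.
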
\smallskip
\begin{proof}
Let $S=\overline{R}$ in Lemma 2.2. Let $\omega_{\overline{R}}=\omega \otimes_{R}\overline{R}$ and $\overline{\mathfrak{m}}=\mathfrak{m}\overline{R}$. We have\smallskip 
\[
\overline{\mathfrak{m}} \subseteq (\Tr_{R}\omega)\overline{R} \subseteq \Tr_{\overline{R}}\omega_{\overline{R}}.
\]\smallskip Since $\omega_{\overline{R}}$ is the canonical module for Cohen-Macaulay local ring $(\overline{R},\overline{\mathfrak{m}})$, it follows that \[\Index(\overline{R}) \leq \Gll(\overline{R})\] and \[\Index(\overline{R}) \leq \E(\overline{R})\] by Proposition 2.3. 
\end{proof}\smallskip
\begin{definition}[17, page 114]
Let $(R,\mathfrak{m})$ be a local ring. We say that $R$ is {\it{analytically unramified}} if its $\mathfrak{m}$-adic completion is a reduced ring. We say that an ideal $I$ of $R$ is analytically unramified if $R/I$ is analytically unramified.
\end{definition}
\begin{definition}[10, page 1]
Let $(R,\mathfrak{m})$ be a local ring. We say that $R$ is an {\it{abstract hypersurface}} if there is a regular local ring $(A,\mathfrak{n})$ and a nonzero element $x \in \mathfrak{n}^2$ such that the $\mathfrak{m}$-adic completion of $R$ can be written as $A/xA$. If $R$ itself can be written as $A/xA$, then we simply say that $R$ is a hypersurface.
\end{definition}
\text{}
If $R$ is an abstract hypersurface, or a hypersurface with infinite residue field, then we have $\Index(R_{\mathfrak{p}}) \leq \Index(R)$ for each analytically unramified prime ideal $\mathfrak{p}$ of $R$.\newline
\begin{corollary}
Let $(R,\mathfrak{m},k)$ be a Cohen-Macaulay local ring with canonical module $\omega$. Let $\mathfrak{p} \in \Spec(R)$. 
\newline\newline
If $\mathfrak{p}R_{\mathfrak{p}} \subseteq (\Tr_{R}\omega)R_{\mathfrak{p}}$, then
\[
\Index(R_{\mathfrak{p}}) \leq \Gll(R_{\mathfrak{p}})
\] and 
\[
\Index(R_{\mathfrak{p}}) \leq \E(R_{\mathfrak{p}}).
\] If also $\mathfrak{p}$ is analytically unramified, then
\[
\Index(R_{\mathfrak{p}}) \leq \E(R).
\]\text{}\newline If $R$ is a hypersurface, or $k$ is infinite and $R$ is an abstract hypersurface, then for each analytically unramified prime ideal $\mathfrak{p} \in \Spec(R)$, we have \smallskip
\[
\Index(R_{\mathfrak{p}}) \leq \Index(R).
\]
\end{corollary}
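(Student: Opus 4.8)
The plan is to handle the four conclusions in turn, deriving the first pair exactly as Corollary 2.4 was derived from Proposition 2.3, and reducing the last two to a single comparison of multiplicities together with the known value of the index of a hypersurface.

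For the inequalities $\Index(R_\mathfrak{p}) \le \Gll(R_\mathfrak{p})$ and $\Index(R_\mathfrak{p}) \le \E(R_\mathfrak{p})$ I would repeat the proof of Corollary 2.4 with the flat base change $R \to R_\mathfrak{p}$ in place of $R \to \overline{R}$. Applying Lemma 2.2 with $S = R_\mathfrak{p}$ and $M = \omega$ gives $(\Tr_R\omega)R_\mathfrak{p} \subseteq \Tr_{R_\mathfrak{p}}(\omega \otimes_R R_\mathfrak{p})$. Since $R$ is Cohen--Macaulay, $\omega_\mathfrak{p} := \omega \otimes_R R_\mathfrak{p}$ is a canonical module for the Cohen--Macaulay local ring $(R_\mathfrak{p}, \mathfrak{p}R_\mathfrak{p})$, so the hypothesis $\mathfrak{p}R_\mathfrak{p} \subseteq (\Tr_R\omega)R_\mathfrak{p}$ yields $\mathfrak{p}R_\mathfrak{p} \subseteq \Tr_{R_\mathfrak{p}}\omega_\mathfrak{p}$. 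Proposition 2.3 applied to $R_\mathfrak{p}$ then delivers both inequalities.

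For $\Index(R_\mathfrak{p}) \le \E(R)$ I would combine the bound $\Index(R_\mathfrak{p}) \le \E(R_\mathfrak{p})$ just obtained with the comparison $\E(R_\mathfrak{p}) \le \E(R)$. The latter is the heart of the matter, and is where analytic unramifiedness of $\mathfrak{p}$ enters: because $R$ is Cohen--Macaulay it is formally equidimensional, and for such rings the Hilbert--Samuel multiplicity does not increase under localization, the concrete tool being $\E(R_\mathfrak{p})\,\E(R/\mathfrak{p}) \le \E(R)$, from which $\E(R_\mathfrak{p}) \le \E(R)$ follows since $\E(R/\mathfrak{p}) \ge 1$. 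I expect this multiplicity inequality to be the main obstacle: $\E(R_\mathfrak{p})$ is taken with respect to $\mathfrak{p}R_\mathfrak{p}$ while $\E(R)$ is taken with respect to $\mathfrak{m}$, so relating them forces one through the dimension formula $\Height\mathfrak{p} + \dim R/\mathfrak{p} = \dim R$ and a superficial-element argument in which the reducedness of the completion of $R/\mathfrak{p}$ is precisely what keeps the relevant lengths under control.

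For the final inequality, observe that a hypersurface or abstract hypersurface is Gorenstein, so $\omega \cong R$, $\Tr_R\omega = R$, and the condition $\mathfrak{p}R_\mathfrak{p} \subseteq (\Tr_R\omega)R_\mathfrak{p} = R_\mathfrak{p}$ holds automatically; hence the previous part already gives $\Index(R_\mathfrak{p}) \le \E(R)$ for every analytically unramified prime $\mathfrak{p}$. It then remains to replace $\E(R)$ by $\Index(R)$, for which I would invoke the known equality $\Index(R) = \E(R)$ for hypersurfaces. In the abstract hypersurface case this is available once $k$ is infinite; for an honest hypersurface $R = A/(f)$ with arbitrary residue field I would first pass to $R(X) = R[X]_{\mathfrak{m}R[X]}$, which is again an honest hypersurface $A(X)/(f)$ with infinite residue field and which preserves both $\E$ and $\Index$, thereby reducing to the infinite-residue-field case. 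Chaining $\Index(R_\mathfrak{p}) \le \E(R) = \Index(R)$ completes the argument, the two external inputs being the localization inequality for multiplicity and the identity $\Index = \E$ for hypersurfaces.
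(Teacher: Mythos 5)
Your derivation of $\Index(R_{\mathfrak{p}})\le\Gll(R_{\mathfrak{p}})$ and $\Index(R_{\mathfrak{p}})\le\E(R_{\mathfrak{p}})$ is exactly the paper's argument (Lemma 2.2 with $S=R_{\mathfrak{p}}$, then Proposition 2.3 applied to $R_{\mathfrak{p}}$ with canonical module $\omega_{\mathfrak{p}}$), and your final step is also the paper's, namely Ding's equality $\Index(R)=\E(R)$ for hypersurfaces [10, Theorem 3.3]. The genuine gap is the middle step: the ``concrete tool'' you propose, $\E(R_{\mathfrak{p}})\,\E(R/\mathfrak{p})\le\E(R)$ for formally equidimensional local rings, is false, so $\E(R_{\mathfrak{p}})\le\E(R)$ cannot be obtained this way. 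For a counterexample, let $T=k\llbracket x,y,z,w\rrbracket$, let $R=T/(x^{2}-y^{2}w)$, and let $\mathfrak{p}=(x,y,z^{2}-w^{3})R$. Then $R$ is a three-dimensional hypersurface domain with $\E(R)=2$, and $R/\mathfrak{p}\cong k\llbracket z,w\rrbracket/(z^{2}-w^{3})$ is a complete one-dimensional domain, so $\mathfrak{p}$ is analytically unramified, $\dim R=\Height(\mathfrak{p})+\dim R/\mathfrak{p}$, and $\E(R/\mathfrak{p})=2$. Moreover $R_{\mathfrak{p}}=T_{\mathfrak{q}}/(x^{2}-y^{2}w)$, where $T_{\mathfrak{q}}$ is the three-dimensional regular local ring obtained by localizing $T$ at $\mathfrak{q}=(x,y,z^{2}-w^{3})T$, whose maximal ideal is generated by the regular system of parameters $x,y,z^{2}-w^{3}$. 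Since $w$ is a unit in $T_{\mathfrak{q}}$, the element $x^{2}-y^{2}w$ lies in $\mathfrak{q}^{2}T_{\mathfrak{q}}\setminus\mathfrak{q}^{3}T_{\mathfrak{q}}$, and multiplicity equals order for a hypersurface quotient of a regular local ring, so $\E(R_{\mathfrak{p}})=2$. Hence
\[
\E(R_{\mathfrak{p}})\,\E(R/\mathfrak{p})=4>2=\E(R),
\]
even though the inequality you actually need, $\E(R_{\mathfrak{p}})\le\E(R)$, does hold here (with equality). Replacing $z^{2}-w^{3}$ by $z^{a}-w^{b}$ with $a<b$ coprime makes $\E(R/\mathfrak{p})=a$ arbitrarily large while $\E(R_{\mathfrak{p}})=\E(R)=2$, so no multiplicative statement of this kind can serve as the tool.

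The correct input---and the one the paper uses---is Nagata's theorem [17, Theorem 40.1] quoted directly: if $\dim R=\Height(\mathfrak{p})+\dim R/\mathfrak{p}$ (automatic when $R$ is Cohen-Macaulay, being catenary and equidimensional) and $R/\mathfrak{p}$ is analytically unramified, then $\E(R_{\mathfrak{p}})\le\E(R)$. That theorem is what you should cite; it is not proved by way of your product inequality. A smaller point about your last step: your reduction of the honest-hypersurface case to infinite residue field through $R(X)=R[X]_{\mathfrak{m}R[X]}$ asserts that this extension preserves $\Index$. Preservation of $\E$ is standard, but preservation of the index (equivalently, of the relevant $\delta$-invariants) under this flat local base change is itself a nontrivial claim that you would need to justify. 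The paper avoids this entirely by citing [10, Theorem 3.3] in a form that covers honest hypersurfaces with no hypothesis on the residue field, reserving the infinite-residue-field assumption for the abstract hypersurface case.
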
\smallskip
\begin{proof}
Let $S=R_{\mathfrak{p}}$ in Lemma 2.2 and let $\omega_{\mathfrak{p}}= \omega \otimes_{R}R_{\mathfrak{p}}$. We have
\[
\mathfrak{p}R_{\mathfrak{p}} \subseteq (\Tr_{R}\omega)R_{\mathfrak{p}} \subseteq \Tr_{R_{\mathfrak{p}}}\omega_{\mathfrak{p}}.
\] Since $\omega_{\mathfrak{p}}$ is the canonical module for Cohen-Macaulay local ring $(R_{\mathfrak{p}}, \mathfrak{p}R_{\mathfrak{p}})$, it follows that $\Index(R_{\mathfrak{p}}) \leq \Gll(R_{\mathfrak{p}})$ and $\Index(R_{\mathfrak{p}}) \leq \E(R_{\mathfrak{p}})$ by Proposition 2.3. If $\mathfrak{p}$ is analytically unramified, then we have $\E(R_{\mathfrak{p}}) \leq \E(R)$ by [17, Theorem 40.1]. Therefore, $\Index(R_{\mathfrak{p}}) \leq \E(R)$. If $R$ is a hypersurface, or $k$ is infinite and $R$ is an abstract hypersurface, then $\Index(R)=\E(R)$ by [10, Theorem 3.3], so $\Index(R_{\mathfrak{p}}) \leq \Index(R)$.
\end{proof}\smallskip
\begin{lemma}[9, Lemmas 1.7 and 2.2]
Let $(R,\mathfrak{m})$ be a Cohen-Macaulay local ring with canonical module $\omega$. Let $x \in \mathfrak{m}$ be a nonzerodivisor such that $x \in \Tr_{R}\omega$, and let $M$ be an indecomposable maximal Cohen-Macaulay $R$-module such that $M \ncong R$. Then $M/xM$ has no $R/xR$-summands.
\end{lemma}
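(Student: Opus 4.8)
The plan is to argue by contraposition through canonical duality, converting the statement about free summands of $M/xM$ into one about summands isomorphic to the canonical module. Write $(-)^{\vee}=\Hom_{R}(-,\omega)$ and set $L=M^{\vee}$; since $(-)^{\vee}$ is an exact duality on the category of maximal Cohen-Macaulay modules with $L^{\vee}\cong M$, the module $L$ is again indecomposable maximal Cohen-Macaulay with $L\not\cong\omega$ (because $M\not\cong R$). Suppose $\overline{M}=M/xM$ had an $\overline{R}$-free summand. Applying $\Hom_{R}(-,\omega)$ to $0\to M\xrightarrow{x}M\to\overline{M}\to0$ and using $\operatorname{Ext}^{i}_{R}(M,\omega)=0$ for $i\ge1$ (as $M$ is maximal Cohen-Macaulay) yields $\operatorname{Ext}^{1}_{R}(\overline{M},\omega)\cong L/xL=\overline{L}$; since $\operatorname{Ext}^{1}_{R}(\overline{R},\omega)$ is the canonical module $\omega_{\overline{R}}=\omega/x\omega$ of $\overline{R}$, a free summand of $\overline{M}$ produces a copy of $\omega_{\overline{R}}$ as a direct summand of $\overline{L}$. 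It then suffices to show this forces $\omega$ to be a direct summand of $L$, which contradicts $L$ being indecomposable with $L\not\cong\omega$.

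To that end I would lift the summand data one step at a time. The split surjection $\overline{L}\to\omega_{\overline{R}}$ lifts, because $\operatorname{Ext}^{1}_{R}(L,\omega)=0$, to a map $\gamma\colon L\to\omega$ whose reduction modulo $x$ is onto; by Nakayama $\gamma$ is surjective. Put $K=\ker\gamma$, which is again maximal Cohen-Macaulay, so $x$ is a nonzerodivisor on $K$ and we obtain a class $\varepsilon=[\,0\to K\to L\xrightarrow{\gamma}\omega\to0\,]\in\operatorname{Ext}^{1}_{R}(\omega,K)$; the goal becomes $\varepsilon=0$. First, $\varepsilon$ dies after reducing the target modulo $x$: the sequence $0\to\overline{K}\to\overline{L}\to\omega_{\overline{R}}\to0$ splits, so composing a retraction $\overline{L}\to\overline{K}$ with $L\to\overline{L}$ gives a map $L\to\overline{K}$ extending $K\to\overline{K}$ along $K\hookrightarrow L$, which is exactly the vanishing of the image of $\varepsilon$ under $\operatorname{Ext}^{1}_{R}(\omega,K)\to\operatorname{Ext}^{1}_{R}(\omega,\overline{K})$. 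Since this map sits in the long exact sequence attached to $0\to K\xrightarrow{x}K\to\overline{K}\to0$, its kernel is precisely $x\cdot\operatorname{Ext}^{1}_{R}(\omega,K)$, so $\varepsilon\in x\cdot\operatorname{Ext}^{1}_{R}(\omega,K)$.

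The crux is then to show that $x$ annihilates $\operatorname{Ext}^{1}_{R}(\omega,K)$, and this is where $x\in\Tr_{R}\omega$ enters essentially. The key observation is that a nonzerodivisor in $\Tr_{R}\omega$ forces $R$ to be generically Gorenstein: $\Tr_{R}\omega$ then lies in no associated (hence no minimal) prime, and since $R_{\mathfrak{p}}$ is Gorenstein exactly when $\Tr_{R}\omega\not\subseteq\mathfrak{p}$, the module $\omega_{\mathfrak{p}}$ is free of rank one at every minimal prime, so $\omega$ has rank one. Writing $x=\sum_{i=1}^{s}g_{i}(w_{i})$ with $g_{i}\in\Hom_{R}(\omega,R)$ and $w_{i}\in\omega$, I would assemble $G=(g_{1},\dots,g_{s})\colon\omega\to R^{s}$ and $H\colon R^{s}\to\omega$ with $e_{i}\mapsto w_{i}$. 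The rank-one property, checked at the minimal primes and propagated using that a Cohen-Macaulay ring has no embedded primes, gives $\sum_{i}g_{i}(v)w_{i}=xv$ for all $v\in\omega$, that is, $H\circ G=x\cdot\mathrm{id}_{\omega}$. Hence multiplication by $x$ on $\omega$ factors through the free module $R^{s}$, so it factors through $\operatorname{Ext}^{1}_{R}(R^{s},K)=0$ on $\operatorname{Ext}^{1}_{R}(\omega,K)$, giving $x\cdot\operatorname{Ext}^{1}_{R}(\omega,K)=0$; combined with the previous paragraph this yields $\varepsilon=0$, so $\omega$ splits off $L$. I expect the main difficulty to be exactly this factorization—extracting from the single membership $x\in\Tr_{R}\omega$ that multiplication by $x$ on $\omega$ factors through a free module, including the global verification of $\sum_{i}g_{i}(v)w_{i}=xv$ rather than merely its generic version; the reduction to the generically Gorenstein case is the insight that makes it work.
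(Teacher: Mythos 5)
The paper itself gives no proof of this lemma: it is imported wholesale from Ding (reference [9, Lemmas 1.7 and 2.2]), so there is no in-paper argument to compare yours against. Your proof is correct, and it follows what is essentially the expected (Ding-style) route: dualize into $\omega$, lift the split surjection $\overline{L}\to\omega_{\overline{R}}$ using $\operatorname{Ext}^1_R(L,\omega)=0$, locate the obstruction class in $x\cdot\operatorname{Ext}^1_R(\omega,K)$, and kill it by showing that $x\in\Tr_R\omega$ forces multiplication by $x$ on $\omega$ to factor through a free module. The crux step also checks out in detail: a nonzerodivisor in $\Tr_R\omega$ avoids every associated prime, and minimal primes are associated, so $R$ is Gorenstein at all minimal primes (Herzog--Hibi--Stamate) and $\omega_{\mathfrak{p}}\cong R_{\mathfrak{p}}$ there; consequently $\sum_i g_i(\cdot)w_i-x\cdot\mathrm{id}_\omega$ localizes to zero at every minimal prime, and since $\mathrm{Ass}(\omega)\subseteq\mathrm{Min}(R)$ (because $\omega$ is maximal Cohen-Macaulay) its image can have no associated prime and hence vanishes, giving $H\circ G=x\cdot\mathrm{id}_\omega$ as you claim.
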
\smallskip
The proof of the following lemma is contained in the proof of [9, Proposition 2.4]. We state it here for convenience.\smallskip
\begin{lemma}[9, Proposition 2.4]
Let $(R,\mathfrak{m})$ be a Cohen-Macaulay local ring with canonical module $\omega$. Let $x \in \mathfrak{m}$ be a nonzerodivisor such that $x \in \Tr_{R}\omega$ and let $\overline{R}=R/xR$. Then\newline
\[
\Index(R) \leq \Index(\overline{R}).
\]
\end{lemma}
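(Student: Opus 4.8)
The plan is to prove the general inequality $\delta_{\overline R}(M/xM)\le \delta_R(M)$ for every finitely generated $R$-module $M$ and then to specialize to $M=R/\mathfrak m^{s}$. We may assume $s:=\Index(\overline R)$ is finite, since otherwise there is nothing to prove. To obtain the inequality, set $n=\delta_R(M)$ and use the definition of the $\delta$-invariant to fix a surjection $X\oplus R^{n}\twoheadrightarrow M$ in which $X$ is a maximal Cohen-Macaulay $R$-module with no free direct summand. Because $-\otimes_R\overline R$ is right exact, it produces a surjection $\overline X\oplus \overline R^{\,n}\twoheadrightarrow M/xM$ of $\overline R$-modules, where $\overline X=X/xX$.

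It remains to check that $\overline X$ is a maximal Cohen-Macaulay $\overline R$-module with no free summand, so that the resulting surjection witnesses $\delta_{\overline R}(M/xM)\le n$. Since $x$ is a nonzerodivisor and $X$ is maximal Cohen-Macaulay, $x$ is $X$-regular, so $\overline X=X/xX$ is maximal Cohen-Macaulay over $\overline R$. For the absence of a free summand I would decompose $X=\bigoplus_i X_i$ into indecomposable maximal Cohen-Macaulay modules; as $X$ has no free summand, each $X_i\ncong R$, and Lemma 2.8 shows that each $\overline{X_i}=X_i/xX_i$ has no $\overline R$-summand. Combining these over the decomposition $\overline X=\bigoplus_i \overline{X_i}$ then yields that $\overline X$ has no free $\overline R$-summand, which establishes $\delta_{\overline R}(M/xM)\le \delta_R(M)$.

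Finally I would specialize to $M=R/\mathfrak m^{s}$. Writing $\overline{\mathfrak m}=\mathfrak m\overline R$, one has $(R/\mathfrak m^{s})\otimes_R\overline R=R/(\mathfrak m^{s}+xR)=\overline R/\overline{\mathfrak m}^{\,s}$, so the inequality gives $\delta_{\overline R}(\overline R/\overline{\mathfrak m}^{\,s})\le \delta_R(R/\mathfrak m^{s})$. By definition of $s=\Index(\overline R)$ the left side equals $1$, while $\delta_R(R/\mathfrak m^{s})\le 1$ always holds; hence $\delta_R(R/\mathfrak m^{s})=1$ and $\Index(R)\le s=\Index(\overline R)$.

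The step I expect to need the most care is the passage from the individual $\overline{X_i}$ to their direct sum: Lemma 2.8 is stated only for indecomposable modules, so concluding that $\overline X$ has no free summand relies on the Krull-Schmidt uniqueness of the decomposition into indecomposables. One would secure this by working over (or reducing to) a complete or Henselian base, after verifying that such a reduction affects none of $\Index(R)$, $\Index(\overline R)$, or the hypothesis $x\in\Tr_R\omega$; everything else, namely right exactness, the $X$-regularity of $x$, and the identification of $\overline R/\overline{\mathfrak m}^{\,s}$, is routine.
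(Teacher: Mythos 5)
Your argument is correct and is essentially the paper's proof: the paper likewise takes a surjection onto $R/\mathfrak{m}^n$ from a maximal Cohen-Macaulay module with no free summand, tensors with $\overline{R}$, and invokes Lemma 2.8 to see that the reduced module has no free $\overline{R}$-summand. Your general inequality $\delta_{\overline{R}}(M/xM)\le\delta_{R}(M)$, evaluated at $M=R/\mathfrak{m}^{s}$ with $s=\Index(\overline{R})$, is just a repackaging of the paper's step ``$\delta_{R}(R/\mathfrak{m}^{n})=0$ implies $\delta_{\overline{R}}(\overline{R}/\overline{\mathfrak{m}}^{n})=0$.''

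The only real divergence is your final paragraph. You are right to flag that Lemma 2.8 is stated for indecomposable modules; in fact the paper applies it to a possibly decomposable $N$ without comment, so your extra care addresses a step the paper glosses over. However, the recombination step does not need Krull--Schmidt, and the completion reduction you sketch is both heavier than necessary and delicate: proving the lemma over $\hat{R}$ yields $\Index(\hat{R})\le\Index\bigl(\widehat{\overline{R}}\bigr)$, and to descend you need $\Index\bigl(\widehat{\overline{R}}\bigr)\le\Index(\overline{R})$; but the inequality you get for free by completing a surjection is $\Index(\overline{R})\le\Index\bigl(\widehat{\overline{R}}\bigr)$, which points the wrong way, so you would have to invoke the genuine (true for Cohen-Macaulay rings with canonical module, but nontrivial) invariance of $\delta$ under completion. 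A much simpler observation closes the gap over $R$ and $\overline{R}$ themselves: over any local ring $(S,\mathfrak{n})$, a module $M$ has a free summand if and only if some $f\in\Hom_{S}(M,S)$ is surjective, if and only if the trace ideal $\Tr_{S}M=\sum_{f\in\Hom_{S}(M,S)}f(M)$ equals $S$; since trace ideals are additive on direct sums, if each $\overline{X_i}$ has no free summand then $\Tr_{\overline{R}}\overline{X}=\sum_{i}\Tr_{\overline{R}}\overline{X_i}\subseteq\overline{\mathfrak{m}}$, so $\overline{X}$ has none. (Equivalently, $\overline{R}$ has local endomorphism ring, so if it were a summand of $\bigoplus_{i}\overline{X_i}$ it would be a summand of some $\overline{X_i}$ by the standard exchange argument; no completeness or uniqueness of decompositions is needed.) With that remark your proof is complete as written.
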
\smallskip
\begin{proof}
Let $\overline{\mathfrak{m}}=\mathfrak{m}/xR$ and $n \geq 1$. We have $\overline{R}/\overline{\mathfrak{m}}^n \cong R/(x+\mathfrak{m}^n)$. Suppose $\delta_{R}(R/\mathfrak{m}^n)=0$. Then we have a maximal Cohen-Macaulay $R$-module $N$ with no free direct summands and a surjective $R$-map \[N \longrightarrow R/\mathfrak{m}^n.\]\text{}\newline Tensoring with $\overline{R}$ gives us a surjective $\overline{R}$-map\newline
\[
N/xN \longrightarrow R/(x+\mathfrak{m}^n).
\]\text{}\newline By Lemma 2.8, $N/xN$ is a maximal Cohen-Macaulay $\overline{R}$-module with no $\overline{R}$-summands. Therefore, $\delta_{\overline{R}}(\overline{R}/\overline{\mathfrak{m}}^n)=0$. It follows that $\Index(R) \leq \Index(\overline{R})$. 
\end{proof}\smallskip
\begin{definition}[14, Definitions 8.5.1 and 8.5.10] Let $(R,\mathfrak{m})$ be a local ring. An element $x \in \mathfrak{m}$ is a {\it{superficial element}} (of order one) if there is a positive integer $c$ such that for all $n \geq c$, we have
\[
(\mathfrak{m}^{n+1}:_R x) \cap \mathfrak{m}^c=\mathfrak{m}^n.
\] A sequence $x_1,...,x_s \in \mathfrak{m}$ is a {\it{superficial sequence}} if for all $1 \leq i \leq s$, the image of $x_i$ in $\mathfrak{m}/(x_1,...,x_{i-1})$ is a superficial element of $R/(x_1,...,x_{i-1})$.
\end{definition}\smallskip
\begin{remark} If $(R,\mathfrak{m},k)$ is a local ring with infinite residue field, then $R$ has a superficial element [14, Proposition 8.5.7].  By [14, Lemma 8.5.4], every superficial element of a local ring with positive depth is a nonzerodivisor. Therefore, a $d$-dimensional Cohen-Macaulay local ring $(R,\mathfrak{m},k)$ with infinite residue field has a superficial sequence ${\bf{x}}=x_1,...,x_d \in \mathfrak{m}$ of length $d$, and this sequence is a regular sequence.
\end{remark}\smallskip
\begin{definition}[16, page 107] For an $R$-module $M$, let $\ell(M)$ denote the length of $M$. For integers $n \gg 0$, the {\it{Samuel function}} $\chi_{R}(n):=\ell(R/\mathfrak{m}^{n+1})$ is a polynomial in $n$ with rational coefficients of the form \[\chi_{R}(n)=\frac{e}{d!}n^d+(\text{terms of lower degree}),\] where $e=e(R)$ is the Hilbert-Samuel multiplicity of $R$. \end{definition}\smallskip
\begin{lemma} Let $(R,\mathfrak{m},k)$ be a $d$-dimensional Cohen-Macaulay local ring with infinite residue field. Let $x_{1},...,x_{d} \in \mathfrak{m}$ be a superficial sequence. Then for $1 \leq s \leq d$, we have \[e(R/(x_{1},...,x_{s}))=e(R).\]\end{lemma}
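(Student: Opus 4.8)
The plan is to reduce to the case of a single superficial element and then compare Samuel functions. For the reduction I would induct on $s$ and prove the stronger chain of equalities $e(R/(x_1,\dots,x_i)) = e(R/(x_1,\dots,x_{i-1}))$ for $1 \le i \le s$. Setting $R_i = R/(x_1,\dots,x_{i-1})$, each $R_i$ is a Cohen-Macaulay local ring with infinite residue field and $\dim R_i = d-i+1 \ge 1$ (since $i \le s \le d$), and by the definition of a superficial sequence the image $\overline{x_i}$ of $x_i$ in $R_i$ is a superficial element; by Remark 2.12 it is a nonzerodivisor. Thus the whole statement follows once I establish the single-element claim: if $(R,\mathfrak{m})$ is a $d$-dimensional Cohen-Macaulay local ring with $d \ge 1$ and $x \in \mathfrak{m}$ is a superficial nonzerodivisor, then $e(R/xR) = e(R)$.

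For this single-element claim, write $\overline{R}=R/xR$ and $\overline{\mathfrak{m}}=\mathfrak{m}\overline{R}$, so that $\overline{R}/\overline{\mathfrak{m}}^{n+1} \cong R/(\mathfrak{m}^{n+1}+xR)$. Multiplication by $x$ yields, for each $n$, the exact sequence
\[ 0 \longrightarrow (\mathfrak{m}^{n+1}:_R x)/\mathfrak{m}^n \longrightarrow R/\mathfrak{m}^n \xrightarrow{\ \cdot x\ } R/\mathfrak{m}^{n+1} \longrightarrow R/(\mathfrak{m}^{n+1}+xR) \longrightarrow 0, \]
whose alternating sum of lengths gives
\[ \ell\!\left(R/(\mathfrak{m}^{n+1}+xR)\right) = \ell(R/\mathfrak{m}^{n+1}) - \ell(R/\mathfrak{m}^n) + \ell\!\left((\mathfrak{m}^{n+1}:_R x)/\mathfrak{m}^n\right). \]

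The crux --- and the step I expect to be the main obstacle --- is to show that the correction term $\ell((\mathfrak{m}^{n+1}:_R x)/\mathfrak{m}^n)$ vanishes for $n \gg 0$, i.e. that $(\mathfrak{m}^{n+1}:_R x)=\mathfrak{m}^n$ eventually. The inclusion $\supseteq$ is automatic. For $\subseteq$ I would combine two inputs. First, the superficial hypothesis provides a $c$ with $(\mathfrak{m}^{n+1}:_R x)\cap \mathfrak{m}^c=\mathfrak{m}^n$ for all $n\ge c$. Second, applying the Artin--Rees lemma to $xR\subseteq R$ gives a $c_0$ with $\mathfrak{m}^{n+1}\cap xR \subseteq x\,\mathfrak{m}^{n+1-c_0}$ for $n\ge c_0$; since $x$ is a nonzerodivisor, any $y$ with $xy\in\mathfrak{m}^{n+1}$ then satisfies $y\in\mathfrak{m}^{n+1-c_0}$, so $(\mathfrak{m}^{n+1}:_R x)\subseteq \mathfrak{m}^{n+1-c_0}$. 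For $n\ge c+c_0-1$ this forces $(\mathfrak{m}^{n+1}:_R x)\subseteq\mathfrak{m}^c$, and intersecting with the superficial identity yields $(\mathfrak{m}^{n+1}:_R x)=\mathfrak{m}^n$, as desired.

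Finally I would pass to multiplicities via the Samuel function of Definition 2.13. With the correction term gone, the displayed length identity reads $\chi_{\overline{R}}(n)=\chi_R(n)-\chi_R(n-1)$ for $n\gg 0$. Writing $\chi_R(n)=\tfrac{e}{d!}n^d+(\text{terms of lower degree})$ with $e=e(R)$, the difference $\chi_R(n)-\chi_R(n-1)$ is a polynomial of degree $d-1$ with leading coefficient $\tfrac{e}{d!}\cdot d=\tfrac{e}{(d-1)!}$. Since $\dim\overline{R}=d-1$ (as $x$ is a nonzerodivisor in a Cohen-Macaulay ring), comparing with the definition of $e(\overline{R})$ gives $e(\overline{R})=(d-1)!\cdot\tfrac{e}{(d-1)!}=e=e(R)$, completing the single-element case and hence the lemma.
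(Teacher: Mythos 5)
Your proof is correct, and its skeleton is the same as the paper's: reduce to the case $s=1$ of a single superficial nonzerodivisor $x$, express the Hilbert--Samuel function of $\overline{R}=R/xR$ as a difference of consecutive values of that of $R$, and compare leading coefficients using $\dim\overline{R}=d-1$. The genuine difference is in how the key length formula is obtained. The paper cites the remark following [18, page 286, Lemma 4] (Zariski--Samuel) to get
\[
\ell\bigl(\overline{R}/\overline{\mathfrak{m}}^{n+1}\bigr)=\ell(R/\mathfrak{m}^{n+1})-\ell(R/\mathfrak{m}^{n})+c \qquad (n \gg 0)
\]
for some constant $c$, and then absorbs $c$ into the ``polynomial of degree $d-2$'' error term. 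You instead derive the formula from the four-term exact sequence and prove outright, via superficiality combined with Artin--Rees and the nonzerodivisor hypothesis, that the correction term $\ell\bigl((\mathfrak{m}^{n+1}:_R x)/\mathfrak{m}^{n}\bigr)$ vanishes for $n\gg 0$. This buys two things: the argument is self-contained rather than resting on the cited source, and it is airtight when $d=1$, where absorbing a constant into a ``polynomial of degree $d-2=-1$'' is not literally possible and one genuinely needs $c=0$ (equivalently $(\mathfrak{m}^{n+1}:_R x)=\mathfrak{m}^{n}$ eventually) --- which is exactly what your Artin--Rees step supplies, and is in fact the sharper statement hiding behind the paper's citation. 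What the paper's route buys is brevity. One cosmetic point: the facts you invoke are Remark 2.11 (superficial elements of positive-depth rings are nonzerodivisors) and Definition 2.12 (the Samuel function); your references are off by one.
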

\begin{proof} It is enough to prove the result for $s=1$. Let $\overline{R}=R/x_{1}R$ and $\overline{\mathfrak{m}}=\mathfrak{m}/x_{1}R$. By Remark 2.11, $x_{1}$ is $R$-regular, so $\text{dim}\,\overline{R}=d-1$. By the remark following [18, page 286, Lemma 4], for $n \gg 0$, we have \[\ell(\overline{R}/\overline{\mathfrak{m}}^{n+1})=\ell(R/\mathfrak{m}^{n+1})-\ell(R/\mathfrak{m}^{n})+c\] for some constant $c$. Therefore, \newline \[\ell(\overline{R}/\overline{\mathfrak{m}}^{n+1})=\frac{e}{d!}[n^{d}-(n-1)^{d}]+(\text{polynomial of degree } d-2 \text{ in } n)\]\[=\frac{e}{(d-1)!}n^{d-1}+(\text{polynomial of degree } d-2 \text{ in } n).\] It follows that $e(\overline{R})=e(R)$.\end{proof}\smallskip
\begin{remark}
For a Gorenstein local ring $(R,\mathfrak{m},k)$ with infinite residue field, Ding proved that $R$ is an abstract hypersurface if and only if $\Index(R)=e(R)$ [10, Theorem 3.3]. Motivated by this result, we prove the following.
\end{remark}\smallskip
\begin{proposition}
Let $(R,\mathfrak{m},k)$ be a $d$-dimensional Cohen-Macaulay local ring with infinite residue field and canonical module $\omega$ such that $\mathfrak{m} \subseteq \Tr_{R}\omega$. Let ${\bf{x}}=x_1,...,x_s \in \mathfrak{m}$ be a superficial sequence, where $1 \leq s \leq d$, and let $\overline{R}=R/{\bf{x}}R$. If $\Index(R)=\E(R)$, then 
\[
\Index(R)=\Index(\overline{R})=\E(\overline{R}).
\]
\end{proposition}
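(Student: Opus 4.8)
The plan is to trap $\Index(\overline{R})$ between $\Index(R)$ from below and $\E(\overline{R})$ from above, and then to observe that under the hypothesis $\Index(R)=\E(R)$ both bounding quantities equal $\Index(R)$, forcing every intermediate inequality to be an equality. By Remark 2.11 the superficial sequence ${\bf{x}}$ is a regular sequence, so $\overline{R}$ is again a Cohen-Macaulay local ring, with canonical module $\omega\otimes_{R}\overline{R}$.

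For the upper bound I would invoke Corollary 2.4: since $\mathfrak{m}\subseteq \Tr_{R}\omega$ and ${\bf{x}}$ is regular, Corollary 2.4 gives $\Index(\overline{R})\leq \E(\overline{R})$. Lemma 2.13, which applies because $k$ is infinite and ${\bf{x}}$ is superficial, then gives $\E(\overline{R})=\E(R)$. Hence $\Index(\overline{R})\leq \E(R)$.

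For the lower bound I would peel the regular sequence off one element at a time. Writing $R_{0}=R$ and $R_{i}=R/(x_{1},\dots,x_{i})R$ for $1\leq i\leq s$, so that $R_{s}=\overline{R}$, the point is that the hypotheses of Lemma 2.9 hold at each stage for $R_{i-1}$ together with the image $\overline{x_{i}}$ of $x_{i}$. Indeed $R_{i-1}$ is Cohen-Macaulay with canonical module $\omega\otimes_{R}R_{i-1}$; the element $\overline{x_{i}}$ is a nonzerodivisor because ${\bf{x}}$ is a regular sequence; and applying the base-change Lemma 2.2 to the quotient map $R\to R_{i-1}$ gives $\mathfrak{m}R_{i-1}\subseteq (\Tr_{R}\omega)R_{i-1}\subseteq \Tr_{R_{i-1}}(\omega\otimes_{R}R_{i-1})$, so that $\overline{x_{i}}\in \Tr_{R_{i-1}}(\omega\otimes_{R}R_{i-1})$. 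Lemma 2.9 then yields $\Index(R_{i-1})\leq \Index(R_{i})$, and chaining these inequalities for $i=1,\dots,s$ produces $\Index(R)\leq \Index(\overline{R})$.

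Combining the two bounds with the hypothesis $\Index(R)=\E(R)$ gives the chain $\Index(R)\leq \Index(\overline{R})\leq \E(\overline{R})=\E(R)=\Index(R)$, which collapses to the desired equalities $\Index(R)=\Index(\overline{R})=\E(\overline{R})$. I expect the only genuine point requiring care is that the trace hypothesis needed by Lemma 2.9 survives to every partial quotient $R_{i-1}$; this is clean precisely because each $R_{i-1}$ is a direct quotient of $R$, so a single application of Lemma 2.2 along $R\to R_{i-1}$ suffices rather than an inductive degradation through successive quotients.
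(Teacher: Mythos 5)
Your proof is correct and follows essentially the same route as the paper: trapping $\Index(\overline{R})$ between $\Index(R)$ from below (Lemma 2.9 together with Remark 2.11) and $\E(\overline{R})=\E(R)$ from above (Corollary 2.4 together with Lemma 2.13), then collapsing the chain using $\Index(R)=\E(R)$. The only difference is organizational: the paper reduces to the case $s=1$ and induces, while you iterate Lemma 2.9 through the intermediate quotients $R_{i}$ explicitly, which has the minor virtue of making visible why the trace hypothesis survives to each stage via a single application of Lemma 2.2 along $R\to R_{i-1}$.
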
\smallskip
\begin{proof}
It is enough to prove the result for $s=1$. By Lemma 2.9 and Remark 2.11, we have $\Index(R) \leq \Index(\overline{R})$. By Corollary 2.4, we have $\Index(\overline{R}) \leq \E(\overline{R})$ and by Lemma 2.13, we have $\E(\overline{R})=\E(R)$. Therefore, 
\[
\E(R)=\Index(R) \leq \Index(\overline{R}) \leq \E(\overline{R})=\E(R).
\]
\end{proof}
\begin{definition}
Let $(R,\mathfrak{m})$ be a $d$-dimensional Cohen-Macaulay local ring. Let $g=\Gll(R)$. A maximal regular sequence ${\bf{x}}=x_1,...,x_d \in \mathfrak{m}$ is a {\it{witness}} to $\Gll(R)$ if $\mathfrak{m}^g \subseteq {\bf{x}}R$.
\end{definition}\smallskip
\begin{lemma}
Let $(R,\mathfrak{m})$ be a $d$-dimensional Cohen-Macaulay local ring. Let ${\bf{x}}=x_1,...,x_r \in \mathfrak{m}$ be a regular sequence that is part of a witness to $\Gll(R)$. Let $\overline{R}=R/{\bf{x}}R$. Then 
\[
\Gll(R)=\Gll(\overline{R}).
\]
\end{lemma}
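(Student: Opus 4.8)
The plan is to prove the two inequalities $\Gll(\overline{R}) \le \Gll(R)$ and $\Gll(R) \le \Gll(\overline{R})$ separately, noting in advance that only the first will use the witness hypothesis, while the second holds for an arbitrary regular sequence. Write $g = \Gll(R)$. By hypothesis ${\bf{x}}$ extends to a witness $x_1,\ldots,x_d$ with $\mathfrak{m}^g \subseteq (x_1,\ldots,x_d)R$; since $R$ is Cohen-Macaulay of dimension $d$, this maximal regular sequence is a system of parameters. Because $x_1,\ldots,x_r$ is a regular sequence, $\overline{R}$ is Cohen-Macaulay of dimension $d-r$, and I set $\overline{\mathfrak{m}} = \mathfrak{m}/{\bf{x}}R$.

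For $\Gll(\overline{R}) \le \Gll(R)$ I would observe that the images $\overline{x_{r+1}},\ldots,\overline{x_d}$ form a regular sequence of length $d-r$ in the $(d-r)$-dimensional Cohen-Macaulay ring $\overline{R}$, hence a system of parameters. Reducing the containment $\mathfrak{m}^g \subseteq (x_1,\ldots,x_d)R$ modulo ${\bf{x}}R$ kills $x_1,\ldots,x_r$ and yields $\overline{\mathfrak{m}}^{\,g} \subseteq (\overline{x_{r+1}},\ldots,\overline{x_d})\overline{R}$. This exhibits $\overline{\mathfrak{m}}^{\,g}$ inside a system of parameters of $\overline{R}$, so $\Gll(\overline{R}) \le g$.

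For the reverse inequality, set $h = \Gll(\overline{R})$ and choose a system of parameters $\overline{y_1},\ldots,\overline{y_{d-r}}$ of $\overline{R}$ with $\overline{\mathfrak{m}}^{\,h} \subseteq (\overline{y_1},\ldots,\overline{y_{d-r}})\overline{R}$. Lifting the $\overline{y_i}$ to elements $y_i \in R$, the quotient $R/(x_1,\ldots,x_r,y_1,\ldots,y_{d-r})$ equals $\overline{R}/(\overline{y_1},\ldots,\overline{y_{d-r}})$, which is Artinian; as this is $d$ elements generating an $\mathfrak{m}$-primary ideal in a $d$-dimensional local ring, they form a system of parameters of $R$. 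Pulling back the containment along the surjection $R \to \overline{R}$ gives $\mathfrak{m}^h \subseteq (x_1,\ldots,x_r,y_1,\ldots,y_{d-r})R$, so $\Gll(R) \le h$. Combining the two inequalities yields the claim.

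The argument is entirely formal, so I do not expect a serious obstacle; the one point that needs care is the bookkeeping that reduced and lifted systems of parameters remain systems of parameters, which I would justify using that a regular sequence of maximal length in a Cohen-Macaulay local ring is a system of parameters, together with the identity $\dim R/(x_1,\ldots,x_r,y_1,\ldots,y_{d-r}) = \dim \overline{R}/(\overline{y_1},\ldots,\overline{y_{d-r}})$. It is worth flagging explicitly that the witness hypothesis is invoked only in the direction $\Gll(\overline{R}) \le \Gll(R)$, since the reverse inequality requires nothing beyond ${\bf{x}}$ being a regular sequence.
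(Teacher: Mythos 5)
Your proof is correct and takes essentially the same approach as the paper's: the inequality $\Gll(\overline{R}) \le \Gll(R)$ by reducing the witness containment modulo ${\bf{x}}R$, and the reverse by lifting a system of parameters of $\overline{R}$ back to $R$ and adjoining $x_1,\ldots,x_r$. The only cosmetic differences are that you handle general $r$ directly where the paper reduces to $r=1$, and you spell out the bookkeeping that the reduced and lifted sequences are systems of parameters.
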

\begin{proof}
It is enough to prove the result for $r=1$. Let $\overline{\mathfrak{m}}=\mathfrak{m}/x_1 R$. Let $g=\Gll(R)$ and $g'=\Gll(\overline{R})$. Suppose $x_1$ is part of witness $x_1,...,x_d$ to $\Gll(R)$. Since $\mathfrak{m}^g \subseteq (x_1,...,x_d)R$, we have $\overline{\mathfrak{m}}^g \subseteq (x_2,...,x_d)\overline{R}$. Therefore, $\Gll(\overline{R}) \leq \Gll(R)$. On the other hand, if $\overline{y}_2,...,\overline{y}_d$ is a regular sequence in $\overline{R}$ such that $\overline{\mathfrak{m}}^{g'} \subseteq (\overline{y}_2,...,\overline{y}_d)\overline{R}$, then $\mathfrak{m}^{g'} \subseteq (x_1, y_2,...,y_d)R$. So $\Gll(R) \leq \Gll(\overline{R})$.
\end{proof}\smallskip
\begin{remark}
Ding also proved that if $(R,\mathfrak{m},k)$ is a Gorenstein local ring with Cohen-Macaulay associated graded ring and infinite residue field, then $\Index(R)=\Gll(R)$ [11, Theorem 2.1]. Motivated by this result, we prove the following.
\end{remark}\smallskip
\begin{proposition}
Let $(R,\mathfrak{m})$ be a Cohen-Macaulay local ring with canonical module $\omega$ such that $\mathfrak{m} \subseteq \Tr_{R}\omega$. Let ${\bf{x}}=x_1,...,x_r \in \mathfrak{m}$ be a regular sequence that is part of a witness to $\Gll(R)$. Let $\overline{R}=R/{\bf{x}}R$. If $\Index(R)=\Gll(R)$, then \[\Index(R)=\Index(\overline{R})=\Gll(\overline{R}).\]
\end{proposition}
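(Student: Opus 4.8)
The plan is to mirror the argument for Proposition 2.15, replacing the multiplicity bounds with the generalized Loewy length bounds. As in that proof, I would first reduce to the case $r=1$. Writing $\overline{R}_1 = R/x_1 R$, I would check that the images $\overline{x}_2, \dots, \overline{x}_r$ form a regular sequence in $\overline{R}_1$ that is part of a witness to $\Gll(\overline{R}_1)$, that $\overline{R}_1$ carries the canonical module $\omega_{\overline{R}_1} = \omega \otimes_R \overline{R}_1$ with $\overline{\mathfrak{m}} \subseteq \Tr_{\overline{R}_1}\omega_{\overline{R}_1}$, and that $\Index(\overline{R}_1) = \Gll(\overline{R}_1)$; once the $r=1$ case is established, these are precisely the hypotheses needed to feed the statement back into itself and iterate.

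For the base case $r=1$ the whole statement follows from sandwiching the invariants between equal quantities. Since $x_1 \in \mathfrak{m} \subseteq \Tr_R\omega$ is a nonzerodivisor, Lemma 2.9 gives $\Index(R) \leq \Index(\overline{R})$. Since $\overline{R}$ is the quotient of $R$ by a regular element of $\mathfrak{m}$ and $\mathfrak{m} \subseteq \Tr_R\omega$, Corollary 2.4 gives $\Index(\overline{R}) \leq \Gll(\overline{R})$. Finally, because $x_1$ is part of a witness to $\Gll(R)$, Lemma 2.17 gives $\Gll(\overline{R}) = \Gll(R)$. Combining these with the hypothesis $\Index(R) = \Gll(R)$ produces the chain
\[
\Gll(R) = \Index(R) \leq \Index(\overline{R}) \leq \Gll(\overline{R}) = \Gll(R),
\]
which forces every term to coincide, and in particular $\Index(R) = \Index(\overline{R}) = \Gll(\overline{R})$.

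The core inequality chain is thus immediate from the cited results, and the only step requiring genuine care is the bookkeeping in the reduction to $r=1$, namely verifying that after killing $x_1$ the image of the remaining part of the witness is again part of a witness to $\Gll(\overline{R}_1)$. This is exactly the computation inside the proof of Lemma 2.17: if $x_1, \dots, x_d$ is a witness to $\Gll(R)$ with $g = \Gll(R)$, then $\mathfrak{m}^g \subseteq (x_1,\dots,x_d)R$ yields $\overline{\mathfrak{m}}^g \subseteq (\overline{x}_2,\dots,\overline{x}_d)\overline{R}_1$, and since $\Gll(\overline{R}_1) = g$ with $\dim \overline{R}_1 = d-1$, the sequence $\overline{x}_2,\dots,\overline{x}_d$ is a maximal regular sequence of length $\dim \overline{R}_1$ witnessing $\Gll(\overline{R}_1)$. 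The trace condition propagates through the base-change inclusion $\overline{\mathfrak{m}} \subseteq (\Tr_R\omega)\overline{R}_1 \subseteq \Tr_{\overline{R}_1}\omega_{\overline{R}_1}$ already recorded in the proof of Corollary 2.4, and the needed equality $\Index(\overline{R}_1) = \Gll(\overline{R}_1)$ is supplied by the $r=1$ case itself. With these three facts in place the induction on $r$ closes and the general statement follows.
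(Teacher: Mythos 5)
Your proof is correct and takes essentially the same route as the paper's: the same sandwich $\Gll(R)=\Index(R)\leq\Index(\overline{R})\leq\Gll(\overline{R})=\Gll(R)$ obtained from Lemma 2.9, Corollary 2.4, and Lemma 2.17 after reducing to $r=1$. The only difference is that you spell out the induction bookkeeping (that the image of the remaining witness, the trace condition, and the equality $\Index=\Gll$ all pass to $R/x_1R$), which the paper compresses into the single sentence ``it is enough to prove the result for $r=1$''; your verification of those three points is accurate.
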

\begin{proof}
It is enough to prove the result for $r=1$. By Corollary 2.4, we have $\Index(\overline{R}) \leq \Gll(\overline{R})$. By Lemma 2.9, we have $\Index(R) \leq \Index(\overline{R})$ and by Lemma 2.17, we have $\Gll(R)=\Gll(\overline{R})$. It follows that
\[
\Gll(R)=\Index(R) \leq \Index(\overline{R}) \leq \Gll(\overline{R})=\Gll(R).
\]
\end{proof}
In [1], we proved the following theorem and corollary generalizing [11, Theorem 2.1] for one-dimensional Cohen-Macaulay local rings.\newline
\begin{theorem}[1, Theorem 2.3] Let $(R,\mathfrak{m})$ be a one-dimensional Cohen-Macaulay local ring for which $\Index(R)$ is finite. Let $s=\Index(R)$ and $x \in \mathfrak{m}^{t} \setminus \mathfrak{m}^{t+1}$ a nonzerodivisor, where $t \geq 1$. If the induced map
\[\overline{x}:\mathfrak{m}^{i-1}/\mathfrak{m}^{i} \longrightarrow \mathfrak{m}^{i+t-1}/\mathfrak{m}^{i+t}\]
is injective for $1 \leq i \leq s$, then \[\Gll(R) \leq \Index(R)+t-1.\]\text{}\newline If $\mathfrak{m}^{s+t-1}$ is not a principal ideal, then $\mathfrak{m}^{s+t-1} \subseteq xR.$
\end{theorem}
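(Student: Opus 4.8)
The plan is to prove the containment $\mathfrak{m}^{s+t-1}\subseteq xR$ directly and read off both conclusions from it. Since $\dim R=1$, a system of parameters is a single nonzerodivisor, so $\Gll(R)$ is the least $n$ with $\mathfrak{m}^n\subseteq zR$ for some nonzerodivisor $z$; it therefore suffices to produce one nonzerodivisor $z$ with $\mathfrak{m}^{s+t-1}\subseteq zR$. I will show $z=x$ works. This gives the stated addendum at once, and it also handles the case where $\mathfrak{m}^{s+t-1}$ is principal, because a principal ideal that contains a nonzerodivisor is generated by a nonzerodivisor and so is its own witness; hence $\Gll(R)\le s+t-1$ in either situation.

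The workhorse is a colon formula extracted from the injectivity hypothesis. Phrased in terms of orders, injectivity of $\overline{x}$ on $\mathfrak{m}^{i-1}/\mathfrak{m}^i$ for $1\le i\le s$ says precisely that for an element of order $p\le s-1$, multiplication by $x$ raises the order by exactly $t$. A short order-counting argument then yields $(\mathfrak{m}^{N}:_R x)=\mathfrak{m}^{N-t}$, equivalently $xR\cap\mathfrak{m}^{N}=x\mathfrak{m}^{N-t}$, for all $N\le s+t$; the ceiling $N\le s+t$ is exactly the range in which every relevant order $p=N-t-1$ still lies in $\{0,\dots,s-1\}$. I use this only at $N=s+t$, where it gives $(\mathfrak{m}^{s+t}:_R x)=\mathfrak{m}^{s}$ and $xR\cap\mathfrak{m}^{s+t}=x\mathfrak{m}^{s}$.

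The invariant enters through $\Index(R)=s\Leftrightarrow\delta_R(R/\mathfrak{m}^s)=1$, that is, no maximal Cohen-Macaulay module without a free summand surjects onto $R/\mathfrak{m}^s$. I argue the contrapositive of the containment: assuming $\mathfrak{m}^{s+t-1}\not\subseteq xR$, I build such a surjection. In dimension one a maximal Cohen-Macaulay module of rank one is a fractional ideal, and it has a free summand if and only if it is principal, so it suffices to exhibit a non-principal fractional ideal surjecting onto $R/\mathfrak{m}^s$. First, the fractional ideal $R+\tfrac{1}{x}\mathfrak{m}^{s+t}$ surjects onto $R/\mathfrak{m}^s$ with kernel $\tfrac{1}{x}\mathfrak{m}^{s+t}$, since $R\cap\tfrac{1}{x}\mathfrak{m}^{s+t}=(\mathfrak{m}^{s+t}:_R x)=\mathfrak{m}^s$; clearing denominators it is isomorphic to $xR+\mathfrak{m}^{s+t}$, and an order comparison shows this ideal is principal exactly when $\mathfrak{m}^{s+t}\subseteq xR$. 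As a non-principal such module is forbidden, we deduce $\mathfrak{m}^{s+t}\subseteq xR$. Next, because $\mathfrak{m}^{s+t-1}\not\subseteq xR$ while $\mathfrak{m}^{s+t-1}$ contains a nonzerodivisor, prime avoidance supplies a nonzerodivisor $w\in\mathfrak{m}^{s+t-1}\setminus xR$; since $\mathfrak{m}w\subseteq\mathfrak{m}^{s+t}\subseteq xR$ we get $(xR:_R w)=\mathfrak{m}$. The fractional ideal $J=R+\tfrac{1}{x}wR\cong xR+wR$ is non-principal (again by orders, using $w\in\mathfrak{m}^t\setminus xR$), and the second isomorphism theorem applied with $K=\mathfrak{m}^s+\tfrac{1}{x}wR$ gives $J/K\cong R/(R\cap K)=R/\mathfrak{m}^s$, where $R\cap K=\mathfrak{m}^s$ follows from $xR\cap\mathfrak{m}^{s+t}=x\mathfrak{m}^s$. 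Thus $J$ is a non-principal rank-one maximal Cohen-Macaulay module surjecting onto $R/\mathfrak{m}^s$, contradicting $\delta_R(R/\mathfrak{m}^s)=1$.

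The delicate point, and the main obstacle, is the colength bookkeeping that forces the quotient to be $R/\mathfrak{m}^s$ rather than $R/\mathfrak{m}^{s-1}$: the naive fractional ideal $R+\tfrac{1}{x}\mathfrak{m}^{s+t-1}$ only surjects onto $R/\mathfrak{m}^{s-1}$, because its kernel meets $R$ in $\mathfrak{m}^{s-1}$, and this is harmless since $\delta_R(R/\mathfrak{m}^{s-1})=0$ anyway. The gain of one degree, i.e. the passage from the crude bound $\mathfrak{m}^{s+t}\subseteq xR$ to $\mathfrak{m}^{s+t-1}\subseteq xR$, is exactly what the nonzerodivisor $w$ buys, and is where the $+t-1$ (rather than $+t$) and the principal/non-principal dichotomy of the statement reside. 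The remaining details to dispatch are routine: the base case $s=1$ (with the convention $\delta_R(R/\mathfrak{m}^0)=0$), the fact that $\mathfrak{m}^{s+t-1}$ contains a nonzerodivisor, the order estimate making each ideal $xR+\mathfrak{m}^{s+t}$ and $xR+wR$ non-principal, and the identification of each fractional ideal as a genuine maximal Cohen-Macaulay module. Assembling these yields $\mathfrak{m}^{s+t-1}\subseteq xR$, hence $\Gll(R)\le s+t-1=\Index(R)+t-1$, and the final assertion is the special case recorded when $\mathfrak{m}^{s+t-1}$ is not principal.
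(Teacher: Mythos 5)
Your proof is correct, but note first that there is no proof in this paper to compare it against: the statement (Theorem 2.20) is imported from the author's earlier article [1, Theorem 2.3] and restated here without proof, so your argument can only be judged on its own terms and against what the form of the statement suggests about the original. On its own terms it holds up: the injectivity hypothesis does give the colon formula $(\mathfrak{m}^{N}:_{R}x)=\mathfrak{m}^{N-t}$ for $t\le N\le s+t$; the modules $R+\tfrac{1}{x}\mathfrak{m}^{s+t}$ and $R+\tfrac{1}{x}wR$ are finitely generated torsion-free, hence maximal Cohen-Macaulay in dimension one; a finitely generated $R$-submodule of $Q$ containing a nonzerodivisor has a free direct summand if and only if it is cyclic (a summand $R\oplus N'$ gives $Q\cong Q\oplus(N'\otimes_{R}Q)$ after tensoring, and since $Q$ is Artinian a length count forces $N'\otimes_{R}Q=0$, whence $N'=0$ by torsion-freeness); both kernel computations $R\cap\tfrac{1}{x}\mathfrak{m}^{s+t}=\mathfrak{m}^{s}$ and $R\cap(\mathfrak{m}^{s}+\tfrac{1}{x}wR)=\mathfrak{m}^{s}$ are correct applications of the colon formula; the order argument showing $xR+wR$ is not cyclic is valid; and prime avoidance (with $xR$ as the single permitted non-prime ideal) does produce the nonzerodivisor $w$. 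So each construction contradicts $\delta_{R}(R/\mathfrak{m}^{s})=1$ exactly as you intend, and both conclusions follow.

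Two observations on the relation to the stated theorem. First, you never actually use the hypothesis that $\mathfrak{m}^{s+t-1}$ is non-principal: from $\mathfrak{m}^{s+t-1}\not\subseteq xR$ alone you reach a contradiction, so your argument proves the containment $\mathfrak{m}^{s+t-1}\subseteq xR$ unconditionally, which is formally stronger than the quoted result. This is consistent rather than suspicious: if $\mathfrak{m}^{n}=wR$ is principal for some $n\ge 1$, then $\mathfrak{m}^{n+k}=w\mathfrak{m}^{k}$ makes the Hilbert function of $R$ periodic, hence constantly equal to $\ell(R/\mathfrak{m})=1$, so $e(R)=1$ and $R$ is a discrete valuation ring, where the containment is trivially true anyway. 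The dichotomy in the statement suggests that the proof in [1] works with the ideal $xR+\mathfrak{m}^{s+t-1}$ itself, whose principality cannot be excluded a priori; your replacement of $\mathfrak{m}^{s+t-1}$ by a single well-chosen nonzerodivisor $w$ is precisely what removes that caveat, at the modest extra cost of the prime-avoidance step and the preliminary bound $\mathfrak{m}^{s+t}\subseteq xR$. Second, a small caution: your blanket claim that in dimension one every rank-one maximal Cohen-Macaulay module is a fractional ideal with ``free summand if and only if principal'' needs care over rings that are not domains; your proof is unaffected because you apply it only to the explicit submodules of $Q$ you construct, for which the Artinian length argument above suffices.
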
\text{}
\begin{corollary}[1, Corollary 2.5] Let $(R,\mathfrak{m})$ be a one-dimensional Cohen-Macaulay local ring with canonical module $\omega$ such that $\mathfrak{m} \subseteq \Tr_{R}(\omega)$. Let $x \in \mathfrak{m}^{t} \setminus \mathfrak{m}^{t+1}$ such that the initial form $x^{*} \in \Gr_{\mathfrak{m}}(R)$ is a regular element. Then \[\Index(R) \leq \Gll(R) \leq \Index(R)+t-1.\]\end{corollary}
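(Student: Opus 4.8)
The plan is to obtain the two inequalities separately: the left-hand bound comes directly from Ding's result (Proposition 2.3), while the right-hand bound is an application of Theorem 2.19. The only genuine content is translating the single hypothesis that $x^{*}$ is a regular element of $\Gr_{\mathfrak{m}}(R)$ into the two distinct hypotheses that Theorem 2.19 requires.

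First I would dispose of the left inequality. Since $\mathfrak{m} \subseteq \Tr_{R}\omega$, Proposition 2.3 gives at once both $\Index(R) \leq \Gll(R)$ and $\Index(R) \leq \E(R)$. The second of these shows $\Index(R)$ is finite, since $\E(R) < \infty$ for a Cohen-Macaulay local ring, so Theorem 2.19 will be applicable; I set $s = \Index(R)$.

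For the right inequality I would invoke Theorem 2.19 with the given $x \in \mathfrak{m}^{t}\setminus\mathfrak{m}^{t+1}$, which requires verifying that $x$ is a nonzerodivisor and that each induced map $\overline{x}\colon \mathfrak{m}^{i-1}/\mathfrak{m}^{i} \to \mathfrak{m}^{i+t-1}/\mathfrak{m}^{i+t}$ is injective for $1 \leq i \leq s$. Both follow from the assumption that the homogeneous degree-$t$ element $x^{*} \in \Gr_{\mathfrak{m}}(R)$ is a nonzerodivisor. For the injectivity, I would observe that $\overline{x}$ is precisely multiplication by $x^{*}$ from the degree-$(i-1)$ component of $\Gr_{\mathfrak{m}}(R)$ into the degree-$(i+t-1)$ component; because a homogeneous nonzerodivisor in a graded ring multiplies injectively on every homogeneous component, each $\overline{x}$ is injective, in particular for $1 \leq i \leq s$.

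To see that $x$ itself is a nonzerodivisor, I would argue by vanishing order: if $xy = 0$ with $y \neq 0$, then Krull's intersection theorem lets me choose $j$ with $y \in \mathfrak{m}^{j}\setminus\mathfrak{m}^{j+1}$, so $y^{*} \neq 0$ is homogeneous of degree $j$; the product $x^{*}y^{*}$ is the image of $xy = 0$ in $\mathfrak{m}^{t+j}/\mathfrak{m}^{t+j+1}$ and hence is zero, which forces $y^{*} = 0$ by regularity of $x^{*}$, a contradiction. Thus Theorem 2.19 applies and yields $\Gll(R) \leq \Index(R) + t - 1$; chaining this with the left inequality gives the stated conclusion. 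There is no serious obstacle here: the one step warranting care is the equivalence, for the homogeneous element $x^{*}$, between being a nonzerodivisor in $\Gr_{\mathfrak{m}}(R)$ and multiplying injectively on each graded component, which is the standard fact about homogeneous nonzerodivisors in graded rings.
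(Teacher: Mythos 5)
Your proof is correct and takes essentially the same route as the paper: the left inequality is Ding's bound (Proposition 2.3) under the hypothesis $\mathfrak{m} \subseteq \Tr_{R}\omega$, and the right inequality is Theorem 2.20 (which you cite as ``Theorem 2.19''), applied after the standard translation of regularity of $x^{*}$ in $\Gr_{\mathfrak{m}}(R)$ into the two facts that $x$ is a nonzerodivisor and that each induced map $\mathfrak{m}^{i-1}/\mathfrak{m}^{i} \to \mathfrak{m}^{i+t-1}/\mathfrak{m}^{i+t}$ is multiplication by $x^{*}$ on graded components and hence injective. This is exactly the structure the paper uses for its generalization (Corollary 2.23, deduced from Corollary 2.7 and Proposition 2.22) and the proof cited from [1].
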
\smallskip
We generalize these results to symbolic powers of prime ideals of height one in a Cohen-Macaulay local ring $R$ of positive dimension. For $n \geq 1$ and $\mathfrak{p} \in \Spec(R)$, we let $\mathfrak{p}^{(n)}$ denote the $n^{\text{th}}$ symbolic power of $\mathfrak{p}$, given by $\mathfrak{p}^{(n)}:=R \cap \mathfrak{p}^{n}R_{\mathfrak{p}}$. In Corollary 2.23 below, we let $F$ denote the filtration given by the descending chain $R=\mathfrak{p}^{(0)} \supseteq \mathfrak{p}^{(1)} \supseteq \mathfrak{p}^{(2)} \supseteq \cdot\cdot\cdot$ of symbolic primes. We denote by $\Gr_{F}(R)$ the associated graded ring of $R$ with respect to $F$. For an element $x \in \mathfrak{p}^{(t)} \setminus \mathfrak{p}^{(t+1)}$, we let $x^{*}=x+\mathfrak{p}^{(t+1)} \in \mathfrak{p}^{(t)}/\mathfrak{p}^{(t+1)}$.\newline
\begin{proposition}
Let $(R,\mathfrak{m})$ be a Cohen-Macaulay local ring with $\mathfrak{p} \in \Spec(R)$ such that $\Height(\mathfrak{p})=1$ and $\Index(R_{\mathfrak{p}})$ is finite. Let $s=\Index(R_{\mathfrak{p}})$ and $x \in \mathfrak{p}^{(t)} \setminus \mathfrak{p}^{(t+1)}$ a nonzerodivisor, where $t \geq 1$. If the induced map
\[\overline{x}:\mathfrak{p}^{(i-1)}/\mathfrak{p}^{(i)} \longrightarrow \mathfrak{p}^{(i+t-1)}/\mathfrak{p}^{(i+t)}\]
is injective for $1 \leq i \leq s$, then \[\Gll(R_{\mathfrak{p}}) \leq \Index(R_{\mathfrak{p}})+t-1.\]
\end{proposition}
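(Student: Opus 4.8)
The plan is to reduce the statement to Theorem 2.20, applied to the one-dimensional Cohen-Macaulay local ring $R_{\mathfrak{p}}$. Since $\Height(\mathfrak{p})=1$ and $R$ is Cohen-Macaulay, $R_{\mathfrak{p}}$ is a one-dimensional Cohen-Macaulay local ring with maximal ideal $\mathfrak{n}:=\mathfrak{p}R_{\mathfrak{p}}$, and by hypothesis $\Index(R_{\mathfrak{p}})=s$ is finite. The entire argument then amounts to translating the symbolic-power data on $R$ into $\mathfrak{n}$-adic data on $R_{\mathfrak{p}}$ and checking that the hypotheses of Theorem 2.20 are met.

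The key dictionary is the identification $\mathfrak{n}^n=(\mathfrak{p}R_{\mathfrak{p}})^n=\mathfrak{p}^n R_{\mathfrak{p}}=\mathfrak{p}^{(n)}R_{\mathfrak{p}}$ for all $n\geq 0$, which holds because $\mathfrak{p}^{(n)}=R\cap\mathfrak{p}^n R_{\mathfrak{p}}$ is the contraction of $\mathfrak{p}^n R_{\mathfrak{p}}$ and a contracted ideal extends back to itself under localization. From this I would record three consequences. First, $R_{\mathfrak{p}}$ is one-dimensional Cohen-Macaulay, as already noted. Second, the image $x/1\in R_{\mathfrak{p}}$ lies in $\mathfrak{n}^t\setminus\mathfrak{n}^{t+1}$: indeed $x\in\mathfrak{p}^{(t)}$ gives $x/1\in\mathfrak{p}^{(t)}R_{\mathfrak{p}}=\mathfrak{n}^t$, while $x\notin\mathfrak{p}^{(t+1)}=R\cap\mathfrak{p}^{t+1}R_{\mathfrak{p}}$ together with $x\in R$ forces $x/1\notin\mathfrak{p}^{t+1}R_{\mathfrak{p}}=\mathfrak{n}^{t+1}$. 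Third, $x/1$ is a nonzerodivisor in $R_{\mathfrak{p}}$: since $R$ is Cohen-Macaulay its associated primes are its minimal primes, the nonzerodivisor $x$ avoids all of them, and the associated primes of $R_{\mathfrak{p}}$ are exactly the minimal primes of $R$ contained in $\mathfrak{p}$, so $x/1$ avoids every associated prime of $R_{\mathfrak{p}}$.

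Next I would show that the injectivity hypothesis transfers. Under the identifications above, each graded piece $\mathfrak{p}^{(j-1)}/\mathfrak{p}^{(j)}$ localizes at $\mathfrak{p}$ to $\mathfrak{n}^{j-1}/\mathfrak{n}^{j}$, and multiplication by $x$ localizes to multiplication by $x/1$; hence the map $\mathfrak{n}^{i-1}/\mathfrak{n}^{i}\to\mathfrak{n}^{i+t-1}/\mathfrak{n}^{i+t}$ induced by $x/1$ is precisely $(\overline{x})_{\mathfrak{p}}$, the localization of the $R$-linear map $\overline{x}\colon\mathfrak{p}^{(i-1)}/\mathfrak{p}^{(i)}\to\mathfrak{p}^{(i+t-1)}/\mathfrak{p}^{(i+t)}$. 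Because localization at $\mathfrak{p}$ is exact, injectivity of $\overline{x}$ for $1\leq i\leq s$ yields injectivity of the corresponding map over $R_{\mathfrak{p}}$ for $1\leq i\leq s$.

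With these facts assembled, Theorem 2.20 applies verbatim to the data $R_{\mathfrak{p}}$, $\mathfrak{n}$, and $x/1\in\mathfrak{n}^t\setminus\mathfrak{n}^{t+1}$, and produces $\Gll(R_{\mathfrak{p}})\leq\Index(R_{\mathfrak{p}})+t-1$, which is exactly the claim. The only genuine points requiring care are the symbolic-power identity $\mathfrak{p}^{(n)}R_{\mathfrak{p}}=\mathfrak{n}^n$ and the verification that $x/1$ is a nonzerodivisor; both are routine, so I expect no serious obstacle beyond matching the localized map $(\overline{x})_{\mathfrak{p}}$ with multiplication by $x/1$ on the associated graded pieces.
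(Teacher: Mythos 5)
Your proof is correct and follows essentially the same route as the paper's: localize at $\mathfrak{p}$, use $\mathfrak{p}^{(n)}R_{\mathfrak{p}}=\mathfrak{p}^{n}R_{\mathfrak{p}}$ and exactness of localization to transfer the hypotheses, and then invoke Theorem 2.20 for the one-dimensional Cohen--Macaulay local ring $R_{\mathfrak{p}}$. If anything, you are slightly more thorough than the paper, which leaves implicit the checks that $x/1\in\mathfrak{n}^{t}\setminus\mathfrak{n}^{t+1}$ and that $x/1$ remains a nonzerodivisor.
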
\smallskip
\begin{proof}
Since $\Height(\mathfrak{p})=1$, we have that $(R_{\mathfrak{p}},\mathfrak{p}R_{\mathfrak{p}})$ is a one-dimensional Cohen-Macaulay local ring. Since $x \in \mathfrak{p}$ is $R$-regular, its image in $\mathfrak{p}R_{\mathfrak{p}}$ is $R_{\mathfrak{p}}$-regular. Since the maps\smallskip 
\[
\overline{x}:\mathfrak{p}^{(i-1)}/\mathfrak{p}^{(i)} \longrightarrow \mathfrak{p}^{(i+t-1)}/\mathfrak{p}^{(i+t)}
\]\text{} are injective for $1 \leq i \leq s$, the localization of each map at $\mathfrak{p}$\newline
\[
\overline{x}:(\mathfrak{p}^{i-1}R_{\mathfrak{p}})/(\mathfrak{p}^{i}R_\mathfrak{p}) \longrightarrow (\mathfrak{p}^{i+t-1}R_{\mathfrak{p}})/(\mathfrak{p}^{i+t}R_{\mathfrak{p}})
\]\text{}\newline is also injective for $1 \leq i \leq s$. By Theorem 2.20, we have $\Gll(R_{\mathfrak{p}}) \leq \Index(R_{\mathfrak{p}})+t-1$. 
\end{proof}\smallskip
\begin{corollary}
Let $(R,\mathfrak{m})$ be a Cohen-Macaulay local ring with canonical module $\omega$. Let $\mathfrak{p} \in \Spec(R)$ such that $\Height(\mathfrak{p})=1$ and $\mathfrak{p}R_{\mathfrak{p}} \subseteq \Tr_{R}(\omega)R_{\mathfrak{p}}$. Let $x \in \mathfrak{p}^{(t)} \setminus \mathfrak{p}^{(t+1)}$ such that $x^* \in \Gr_{F}(R)$ is a regular element. Then 
\[
\Index(R_{\mathfrak{p}}) \leq \Gll(R_{\mathfrak{p}}) \leq \Index(R_{\mathfrak{p}})+t-1.
\]
\end{corollary}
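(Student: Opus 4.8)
The plan is to deduce the statement from its one-dimensional version, Corollary 2.21, applied to the local ring $R_{\mathfrak{p}}$. Since $\Height(\mathfrak{p})=1$ and $R$ is Cohen-Macaulay, $(R_{\mathfrak{p}},\mathfrak{p}R_{\mathfrak{p}})$ is a one-dimensional Cohen-Macaulay local ring and $\omega_{\mathfrak{p}}=\omega\otimes_{R}R_{\mathfrak{p}}$ is a canonical module for it. The trace hypothesis passes to $R_{\mathfrak{p}}$ exactly as in the proof of Corollary 2.7: by Lemma 2.2 and the assumption $\mathfrak{p}R_{\mathfrak{p}}\subseteq(\Tr_{R}\omega)R_{\mathfrak{p}}$ we get $\mathfrak{p}R_{\mathfrak{p}}\subseteq(\Tr_{R}\omega)R_{\mathfrak{p}}\subseteq\Tr_{R_{\mathfrak{p}}}\omega_{\mathfrak{p}}$, so $R_{\mathfrak{p}}$ satisfies the hypotheses of Corollary 2.21; in particular $\Index(R_{\mathfrak{p}})$ is finite.

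Next I would locate the element required by Corollary 2.21, namely the image $\overline{x}=x/1\in R_{\mathfrak{p}}$. From $\mathfrak{p}^{(n)}=R\cap\mathfrak{p}^{n}R_{\mathfrak{p}}$, the containment $x\in\mathfrak{p}^{(t)}$ gives $\overline{x}\in\mathfrak{p}^{t}R_{\mathfrak{p}}$ and $x\notin\mathfrak{p}^{(t+1)}$ gives $\overline{x}\notin\mathfrak{p}^{t+1}R_{\mathfrak{p}}$, so $\overline{x}\in(\mathfrak{p}R_{\mathfrak{p}})^{t}\setminus(\mathfrak{p}R_{\mathfrak{p}})^{t+1}$ and its initial form sits in degree exactly $t$ of $\Gr_{\mathfrak{p}R_{\mathfrak{p}}}(R_{\mathfrak{p}})$.

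The central step is to transport the regularity of $x^{*}$ from $\Gr_{F}(R)$ to $\Gr_{\mathfrak{p}R_{\mathfrak{p}}}(R_{\mathfrak{p}})$. I would identify the target as a localization of $\Gr_{F}(R)$: applying $-\otimes_{R}R_{\mathfrak{p}}$ to each graded component and using $\mathfrak{p}^{(n)}R_{\mathfrak{p}}=\mathfrak{p}^{n}R_{\mathfrak{p}}$ gives $(\mathfrak{p}^{(n)}/\mathfrak{p}^{(n+1)})\otimes_{R}R_{\mathfrak{p}}\cong\mathfrak{p}^{n}R_{\mathfrak{p}}/\mathfrak{p}^{n+1}R_{\mathfrak{p}}$, so that $\Gr_{\mathfrak{p}R_{\mathfrak{p}}}(R_{\mathfrak{p}})$ is the localization of the graded ring $\Gr_{F}(R)$ at the multiplicative set $R\setminus\mathfrak{p}$ and $x^{*}$ maps to the initial form of $\overline{x}$. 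A homogeneous nonzerodivisor remains a nonzerodivisor under this localization, for if $ux^{*}\alpha=0$ with $u\in R\setminus\mathfrak{p}$, then $x^{*}(u\alpha)=0$ forces $u\alpha=0$; hence the initial form of $\overline{x}$ is a regular element of $\Gr_{\mathfrak{p}R_{\mathfrak{p}}}(R_{\mathfrak{p}})$. With every hypothesis verified, Corollary 2.21 applied to $R_{\mathfrak{p}}$ yields simultaneously $\Index(R_{\mathfrak{p}})\leq\Gll(R_{\mathfrak{p}})$ and $\Gll(R_{\mathfrak{p}})\leq\Index(R_{\mathfrak{p}})+t-1$.

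The point I expect to be the genuine obstacle, and the reason I would work inside $R_{\mathfrak{p}}$ rather than invoke Proposition 2.22 directly, is that the symbolic filtration $F$ on $R$ need not be separated: $\bigcap_{n}\mathfrak{p}^{(n)}$ is the kernel of $R\to R_{\mathfrak{p}}$, which can be nonzero when $R$ is not a domain. Thus $x$ itself may fail to be a nonzerodivisor of $R$ even though $x^{*}$ is regular, so the nonzerodivisor hypothesis of Proposition 2.22 is not automatic. Passing to $R_{\mathfrak{p}}$ cures this, since $\bigcap_{n}\mathfrak{p}^{n}R_{\mathfrak{p}}=0$ by Krull's intersection theorem; there the regular initial form of $\overline{x}$ does force $\overline{x}$ to be a nonzerodivisor, and the one-dimensional Corollary 2.21 applies without obstruction.
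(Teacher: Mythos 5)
Your proof is correct, but it is not the paper's proof: the paper disposes of this corollary in one line, citing Corollary 2.7 for the left inequality and Proposition 2.22 for the right one, whereas you localize everything first and then invoke the one-dimensional Corollary 2.21 for $R_{\mathfrak{p}}$, transporting the regularity of $x^{*}$ across the identification $\Gr_{F}(R)\otimes_{R}R_{\mathfrak{p}}\cong\Gr_{\mathfrak{p}R_{\mathfrak{p}}}(R_{\mathfrak{p}})$. The two routes are close in spirit (Proposition 2.22 is itself proved by localizing and applying Theorem 2.20), but your rearrangement buys something real: the obstacle you flag in your last paragraph is a genuine gap in the paper's own argument. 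Proposition 2.22 requires $x$ to be a nonzerodivisor of $R$, and this does \emph{not} follow from regularity of $x^{*}$ in $\Gr_{F}(R)$ when the symbolic filtration is not separated, i.e.\ when $\bigcap_{n}\mathfrak{p}^{(n)}=\ker(R\to R_{\mathfrak{p}})\neq 0$. Concretely, take $R=k\llbracket u,v,w\rrbracket/(uw)$ and $\mathfrak{p}=(u,v)R$, a height-one prime of a hypersurface (so the trace hypothesis is vacuous); then $\mathfrak{p}^{(n)}=(v^{n},u)R$, so $\Gr_{F}(R)\cong(R/\mathfrak{p})[Y]=k\llbracket w\rrbracket[Y]$ is a domain, and the element $wv^{t}\in\mathfrak{p}^{(t)}\setminus\mathfrak{p}^{(t+1)}$ has regular initial form $\overline{w}Y^{t}$ yet is a zerodivisor of $R$ (it kills $u$). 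Thus the hypotheses of Corollary 2.23 can hold while those of Proposition 2.22 fail, and the paper's citation alone does not cover this case; your argument --- pass to $R_{\mathfrak{p}}$, where the $\mathfrak{p}R_{\mathfrak{p}}$-adic filtration is separated by Krull's intersection theorem, so the regular initial form of $x/1$ does force $x/1$ to be a nonzerodivisor --- is exactly the needed repair. (Alternatively, one could make the paper's one-line proof complete by adding the hypothesis that $x$ is a nonzerodivisor of $R$, or by assuming every minimal prime of $R$ is contained in $\mathfrak{p}$, e.g.\ that $R$ is a domain.)
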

\begin{proof}
This follows from Corollary 2.7 and Proposition 2.22.
\end{proof}\smallskip
\begin{remark}
Letting $(R,\mathfrak{m})$ be a one-dimensional Cohen-Macaulay local ring and $\mathfrak{p}=\mathfrak{m}$ in Corollary 2.23, we recover Corollary 2.21.
\end{remark}\text{}
\section{\large{Criteria for Elias and Burch ideals}} \text{} 
In this section, we prove criteria for when ideals in a local ring are Elias and Burch. We are motivated in part by [7, Proposition 2.10], where De Stefani effectively studies when powers of the maximal ideal in a one-dimensional Gorenstein local ring are Elias. 

We are also motivated by [5, Proposition 3.7], where Dao proves that, for a pair of ideals $I$ and $J$ in a local ring $(R,\mathfrak{m})$, the sum $I+J\mathfrak{m}$ is Burch if $\mathfrak{m}J \nsubseteq \mathfrak{m}I$. When $R$ has positive depth, we show that $I$ is Burch if $\mathfrak{m}J \nsubseteq \mathfrak{m}I$ and $IJ \subseteq xR$ for some nonzerodivisor $x \in \mathfrak{m}$. We study the relationship between Elias, Burch, and Ulrich ideals, and use our criteria to give examples of ideals in numerical semigroup rings that are Elias and Burch. 
\newline\newline
Let $(R,\mathfrak{m})$ be a one-dimensional Cohen-Macaulay local ring and $M$ an $R$-module. The {\it{type}} of $M$, denoted $\text{type}_{R}(M)$ or simply $\text{type}(M)$, is given by \smallskip\[\text{type}_{R}(M):=\text{dim}_{k}\text{Ext}_{R}^{\text{dim}_{R}M}(k,M).\]\text{}\newline For an $\mathfrak{m}$-primary ideal $I$ of $R$, we have $\text{type}_R(R/I) \leq \text{type}_R(I)$. When equality holds, we say that $I$ is {\it{Elias}}.\newline
\begin{definition}[4, Definition 1.1]
Let $(R,\mathfrak{m})$ be a one-dimensional Cohen-Macaulay local ring. Let $I$ be an $\mathfrak{m}$-primary ideal of $R$. We say that $I$ is {\it{Elias}} if $\text{type}_{R}(I)=\text{type}_{R}(R/I)$.
\end{definition}\text{}\newline
In [4], Dao relates Elias ideals to several other classes of ideals, including Ulrich ideals.\newline
\begin{definition}[4, Definition 2.1]
Let $(R,\mathfrak{m},k)$ be a one-dimensional Cohen-Macaulay local ring. Let $I$ be an $\mathfrak{m}$-primary ideal of $R$. We say that $I$ is {\it{Ulrich}} (as an $R$-module) if $\mu(I)=e(R)$.\newline\newline If $k$ is infinite, then $I$ is Ulrich if and only if $xI=\mathfrak{m}I$ for some $x \in \mathfrak{m}$ (equivalently, for any $x \in \mathfrak{m}$ such that $\ell(R/xR)=e(R)$). We call such $x$ a {\it{witness}} to $I$ as an Ulrich ideal.
\end{definition}\smallskip
\begin{remark}
Note that $xR$ is an Elias ideal for any nonzerodivisor $x \in R$. Since we have $\mathfrak{m}^s \subseteq xR$ for sufficiently large $s \geq 1$ and Elias ideals are closed under inclusion, it follows that $\mathfrak{m}^s$ is an Elias ideal for sufficiently large $s$. [4, Corollary 1.3].
\end{remark}\smallskip\smallskip
\begin{definition}[4, Definition 3.1]
The {\it{Elias index}} of $R$, denoted \text{eli}$(R)$, is the smallest positive integer $s$ such that $\mathfrak{m}^s$ is Elias. The {\it{Ulrich index}} of $R$, denoted $\Ulr(R)$, is the smallest $s$ such that $\mathfrak{m}^s$ is Ulrich. 
\end{definition}\smallskip\smallskip
We see that $\Eli(R) \leq \Gll(R)$. If the residue field $k$ is infinite, we also have $\Gll(R) \leq \Ulr(R)+1$. [4, Theorem 3.2]. If $k$ is infinite and the associated graded ring $\Gr_{\mathfrak{m}}(R)$ is Cohen-Macaulay, then both inequalities are equalities.\smallskip\smallskip
\begin{theorem}[4, Theorem 3.2]
Let $(R,\mathfrak{m},k)$ be a one-dimensional Cohen-Macaulay local ring. Suppose the associated graded ring $\Gr_{\mathfrak{m}}(R)$ is Cohen-Macaulay and $k$ is infinite. Then $\Eli(R)=\Gll(R)=\Ulr(R)+1$. 
\end{theorem}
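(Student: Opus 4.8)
The plan is to use the two inequalities recorded just before the statement, namely $\Eli(R)\le\Gll(R)$ and (for $k$ infinite) $\Gll(R)\le\Ulr(R)+1$, and to close the cycle by proving the reverse inequalities $\Gll(R)\le\Eli(R)$ and $\Ulr(R)+1\le\Gll(R)$. Since $k$ is infinite and $\Gr_{\mathfrak{m}}(R)$ is Cohen--Macaulay of dimension one, I would first fix a superficial element $x\in\mathfrak{m}\setminus\mathfrak{m}^2$ whose initial form $x^{*}$ is a nonzerodivisor in $\Gr_{\mathfrak{m}}(R)$; such an $x$ is a minimal reduction of $\mathfrak{m}$, so $\ell(R/xR)=e(R)$ and $x$ is a witness in the Ulrich sense. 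Regularity of $x^{*}$ supplies the Valabrega--Valla identities $\mathfrak{m}^{n+1}\cap xR=x\mathfrak{m}^{n}$ for all $n$, together with $\Gr_{\bar{\mathfrak{m}}}(\bar R)\cong\Gr_{\mathfrak{m}}(R)/x^{*}\Gr_{\mathfrak{m}}(R)$, where $\bar R=R/xR$ and $\bar{\mathfrak{m}}=\mathfrak{m}/xR$. The right-hand side is an Artinian standard graded algebra, so its Hilbert function $h(n)-h(n-1)$, with $h(n)=\dim_k\mathfrak{m}^{n}/\mathfrak{m}^{n+1}$, is positive for $1\le n\le\rho$ and zero for $n>\rho$, where $\rho$ is the least degree with $h(\rho)=e(R)$.

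For the Ulrich side I would show $\mathfrak{m}^{s}\subseteq xR\iff\mathfrak{m}^{s}=x\mathfrak{m}^{s-1}\iff\mathfrak{m}^{s-1}$ is Ulrich with witness $x$. The first equivalence is immediate from $\mathfrak{m}^{s}\cap xR=x\mathfrak{m}^{s-1}$, and the second is the defining relation $x\mathfrak{m}^{s-1}=\mathfrak{m}\cdot\mathfrak{m}^{s-1}$. The identity $\mathfrak{m}^{s}=x\mathfrak{m}^{s-1}$ holds exactly when $h(s)=h(s-1)$, i.e. exactly when $s\ge\rho+1$ by the previous paragraph. Hence $\Gll(R)=\rho+1$ and $\Ulr(R)=\rho$, which yields $\Ulr(R)+1\le\Gll(R)$.

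For the Elias side the key input is the identity $\Type_R(M)=\dim_k\operatorname{soc}(M/xM)$ for a maximal Cohen--Macaulay module $M$, obtained from $0\to M\xrightarrow{x}M\to M/xM\to 0$ and the natural isomorphism $\operatorname{Ext}^1_R(k,M)\cong\Hom_R(k,M/xM)=\operatorname{soc}(M/xM)$. Applying $\operatorname{Ext}^{\bullet}_R(k,-)$ to $0\to\mathfrak{m}^{s}\to R\to R/\mathfrak{m}^{s}\to 0$ and using $\Hom(k,\mathfrak{m}^{s})=\Hom(k,R)=0$, I would identify the condition that $\mathfrak{m}^{s}$ be Elias with the vanishing of the induced map $\operatorname{Ext}^1_R(k,\mathfrak{m}^{s})\to\operatorname{Ext}^1_R(k,R)$. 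By naturality this is the socle map $\operatorname{soc}(\mathfrak{m}^{s}/x\mathfrak{m}^{s})\to\operatorname{soc}(\bar R)$ induced by the inclusion, whose kernel is $(\mathfrak{m}^{s}\cap xR)/x\mathfrak{m}^{s}=x\mathfrak{m}^{s-1}/x\mathfrak{m}^{s}$; this kernel already lies in the socle, and the quotient of the socle by it is $\operatorname{soc}_{\bar R}(\bar{\mathfrak{m}}^{s})$, where $\bar{\mathfrak{m}}^{s}\cong\mathfrak{m}^{s}/x\mathfrak{m}^{s-1}$. Since $\bar R$ is Artinian, $\operatorname{soc}_{\bar R}(\bar{\mathfrak{m}}^{s})=0$ iff $\bar{\mathfrak{m}}^{s}=0$, i.e. iff $\mathfrak{m}^{s}\subseteq xR$. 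Thus $\mathfrak{m}^{s}$ is Elias iff $\mathfrak{m}^{s}\subseteq xR$, giving $\Gll(R)\le\Eli(R)$. Combining everything, $\Gll(R)\le\Eli(R)\le\Gll(R)\le\Ulr(R)+1\le\Gll(R)$ forces the three quantities to coincide.

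I expect the main obstacle to be the Elias step: carrying the natural isomorphism $\operatorname{Ext}^1_R(k,-)\cong\operatorname{soc}(-/x-)$ through the inclusion $\mathfrak{m}^{s}\hookrightarrow R$ so that being Elias genuinely becomes vanishing of the socle map, and then using the Valabrega--Valla identities $\mathfrak{m}^{s}\cap xR=x\mathfrak{m}^{s-1}$ and $\mathfrak{m}^{s+1}\cap xR=x\mathfrak{m}^{s}$ to pin down the surviving part of the socle as $\operatorname{soc}_{\bar R}(\bar{\mathfrak{m}}^{s})$. The degenerate case $\rho=0$ (when $R$ is regular, so $\Ulr(R)=0$) should be checked directly, but it is routine.
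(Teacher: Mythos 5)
A point of context first: the paper does not actually prove this statement. Theorem 3.5 is imported verbatim from Dao [4, Theorem 3.2] as a known result, so there is no in-paper proof to compare against; your argument has to be judged as a reconstruction, and as such it is correct. Your route --- fix $x\in\mathfrak{m}\setminus\mathfrak{m}^2$ with $x^{*}$ a nonzerodivisor on $\Gr_{\mathfrak{m}}(R)$, use the Valabrega--Valla identities $\mathfrak{m}^{n+1}\cap xR=x\mathfrak{m}^{n}$, read off the Ulrich index from the Hilbert function of $\Gr_{\mathfrak{m}}(R)/x^{*}\Gr_{\mathfrak{m}}(R)$, and detect the Elias property of $\mathfrak{m}^{s}$ by the vanishing of $\operatorname{Ext}^1_R(k,\mathfrak{m}^{s})\to\operatorname{Ext}^1_R(k,R)$, transported to the socle map via the natural isomorphism $\operatorname{Ext}^1_R(k,M)\cong\operatorname{soc}(M/xM)$ --- is a self-contained homological substitute for quoting Dao's characterization of Elias ideals ($I$ is Elias iff $(I:_Q\mathfrak{m})\subseteq R$, the tool this paper itself invokes in Proposition 3.11). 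With that characterization the key step is shorter: if $\mathfrak{m}^{s}\not\subseteq xR$, a socle representative $u$ of the nonzero module $\overline{\mathfrak{m}}^{s}$ satisfies $u/x\in(\mathfrak{m}^{s}:_Q\mathfrak{m})\setminus R$, so $\mathfrak{m}^{s}$ is not Elias. Your Ext-and-socle computation proves the needed instance of that equivalence from scratch; what it buys is independence from [4, Theorem 1.2], at the cost of the naturality bookkeeping.

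Two points to tighten. First, the socle-lifting claim you flag as the main obstacle does go through, by exactly the identities you name: if $u+x\mathfrak{m}^{s-1}\in\operatorname{soc}_{\overline{R}}(\overline{\mathfrak{m}}^{s})$ with $u\in\mathfrak{m}^{s}$, then $\mathfrak{m}u\subseteq xR\cap\mathfrak{m}^{s+1}=x\mathfrak{m}^{s}$, so $u$ itself is already a socle element of $\mathfrak{m}^{s}/x\mathfrak{m}^{s}$ mapping onto the given class; hence the image of $\operatorname{soc}(\mathfrak{m}^{s}/x\mathfrak{m}^{s})$ in $\overline{R}$ is precisely $\operatorname{soc}_{\overline{R}}(\overline{\mathfrak{m}}^{s})$, as you assert. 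Second, a small ordering issue: in your Ulrich paragraph the claim $\Gll(R)=\rho+1$ is premature, since at that stage you only know $\Gll(R)\le\rho+1$; a different parameter $y$ could a priori witness a smaller power of $\mathfrak{m}$. But the lower bound is not needed there: your Elias step gives $\Eli(R)=\rho+1$, the quoted inequality $\Eli(R)\le\Gll(R)$ then forces $\Gll(R)\ge\rho+1$, and the chain $\Ulr(R)+1=\rho+1=\Eli(R)\le\Gll(R)\le\Ulr(R)+1$ closes. Presented in that order (Elias step first, then the Ulrich computation), your argument is complete, including the degenerate case $\rho=0$, where $R$ is a discrete valuation ring and all three quantities are trivially determined.
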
\smallskip\smallskip
We use Theorem 3.5 to answer a special case of the following question of De Stefani. Recall that, for ideals $J \subseteq I$ of a local ring $R$, we say that $J$ is a {\it{reduction}} of $I$ if $I^{n+1}=JI^n$ for some non-negative integer $n$.\smallskip\smallskip
\begin{question}[7, Question 4.5 (ii)] For a Gorenstein local ring $R$ with infinite residue field, is $\Gll(R)$ always attained by a system of parameters that generates a reduction of $\mathfrak{m}$?
\end{question}\smallskip
\begin{definition}[14, Definition 8.2.3]
Let $(R,\mathfrak{m})$ be a local ring. Let $J \subseteq I$ be ideals of $R$ with $J$ a reduction of $I$. The {\it{reduction number}} of $I$ with respect to $J$ is the minimum integer $n$ such that $JI^n=I^{n+1}$.
\end{definition}\smallskip\smallskip
\begin{corollary}
Let $(R,\mathfrak{m},k)$ be a one-dimensional Cohen-Macaulay local ring with infinite residue field and $\Gr_{\mathfrak{m}}(R)$ Cohen-Macaulay. Then we have the following.\newline
\begin{enumerate}
\item $\Gll(R)$ is attained by an element $x \in \mathfrak{m}$ that generates a reduction of $\mathfrak{m}$ with reduction number $\Ulr(R)$.
\item The set of witnesses $x$ to $\Gll(R)$ in the vector space $\mathfrak{m}/\mathfrak{m}^2$ contains a non-empty Zariski open set.
\end{enumerate}
\end{corollary}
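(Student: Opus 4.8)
The plan is to deduce both statements from Theorem 3.5 together with the characterization of Ulrich witnesses. Write $u=\Ulr(R)$ and $g=\Gll(R)$; by Theorem 3.5 we have $g=u+1$, and $\mathfrak{m}^u$ is an Ulrich ideal by definition of $\Ulr(R)$. Since $k$ is infinite, the definition of an Ulrich ideal supplies a witness, and in fact tells us that \emph{every} $x\in\mathfrak{m}$ with $\ell(R/xR)=e(R)$ satisfies $x\mathfrak{m}^u=\mathfrak{m}^{u+1}$. The central observation, used for both parts, is that any such $x$ is automatically a witness to $\Gll(R)$: indeed $\mathfrak{m}^g=\mathfrak{m}^{u+1}=x\mathfrak{m}^u\subseteq xR$, and $x$ is a nonzerodivisor because $\ell(R/xR)=e(R)$ is finite and so forces $x$ to be a parameter, hence (in the Cohen-Macaulay case, dimension one) a nonzerodivisor.

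For part (1), I would fix one such $x$. The equality $x\mathfrak{m}^u=\mathfrak{m}^{u+1}$ says precisely that $(x)$ is a reduction of $\mathfrak{m}$ with reduction number at most $u$. To see the reduction number is exactly $u$, I argue by the minimality built into $\Ulr(R)$: if the reduction number were some $r<u$, then $x\mathfrak{m}^r=\mathfrak{m}^{r+1}=\mathfrak{m}\cdot\mathfrak{m}^r$, which is exactly the statement that $\mathfrak{m}^r$ is Ulrich with witness $x$, contradicting that $u=\Ulr(R)$ is the least such exponent. (The degenerate case $r=0$ forces $\mathfrak{m}$ principal, i.e.\ $R$ a DVR, and is checked directly.) Combined with the central observation, $x$ witnesses $\Gll(R)$ and generates a reduction of $\mathfrak{m}$ of reduction number $u=\Ulr(R)$.

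For part (2), I would show that the set of leading forms of $\Gll(R)$-witnesses contains the set $U=\{\xi\in\mathfrak{m}/\mathfrak{m}^2:\xi\text{ is a nonzerodivisor in }\Gr_{\mathfrak{m}}(R)\}$, identifying $\mathfrak{m}/\mathfrak{m}^2$ with the degree-one piece of $\Gr_{\mathfrak{m}}(R)$. When $\Gr_{\mathfrak{m}}(R)$ is Cohen-Macaulay, a lift $x$ of a nonzerodivisor $\xi=x^{*}$ is a superficial element with $\ell(R/xR)=e(R)$ by the standard superficial-element/minimal-reduction theory of [14], so by the central observation every such $x$ is a $\Gll(R)$-witness; thus $U$ lies in the image of the witness set. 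Finally $U$ is non-empty and Zariski open: it is the complement in the degree-one piece of $\Gr_{\mathfrak{m}}(R)$ of $\bigcup_{\mathfrak{p}}(\mathfrak{p}\cap[\Gr_{\mathfrak{m}}(R)]_1)$ taken over the associated primes $\mathfrak{p}$ of $\Gr_{\mathfrak{m}}(R)$. Since $\Gr_{\mathfrak{m}}(R)$ is Cohen-Macaulay of dimension one it is unmixed, so every associated prime $\mathfrak{p}$ has $\dim\Gr_{\mathfrak{m}}(R)/\mathfrak{p}=1$; as $[\Gr_{\mathfrak{m}}(R)]_1$ generates the irrelevant ideal, no such $\mathfrak{p}$ contains all of $[\Gr_{\mathfrak{m}}(R)]_1$, and hence each $\mathfrak{p}\cap[\Gr_{\mathfrak{m}}(R)]_1$ is a proper $k$-subspace. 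Over the infinite field $k$ the complement of finitely many proper subspaces is a non-empty Zariski open set.

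The main obstacle I expect is part (2): pinning down that the condition $\ell(R/xR)=e(R)$ is governed purely by the leading form $x^{*}$ and translates into the nonzerodivisor condition in $\Gr_{\mathfrak{m}}(R)$. This is where the Cohen-Macaulayness of $\Gr_{\mathfrak{m}}(R)$ does the real work, and where care is needed to ensure the relevant set is genuinely open and non-empty rather than merely dense. The reduction-number equality in part (1) is the other delicate point, requiring the short minimality argument against $\Ulr(R)$ rather than a direct computation.
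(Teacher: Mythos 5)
Your proposal is correct, and part (1) is essentially the paper's argument: both invoke Theorem 3.5 to get $g=u+1$, take an Ulrich witness $x$ for $\mathfrak{m}^{g-1}$, observe $\mathfrak{m}^{g}=x\mathfrak{m}^{g-1}\subseteq xR$, and pin down the reduction number by a minimality argument. (The paper's minimality step is marginally more direct: if $r<g-1$ then $\mathfrak{m}^{r+1}=x\mathfrak{m}^{r}\subseteq xR$ contradicts the definition of $\Gll(R)$ itself, whereas you contradict the minimality of $\Ulr(R)$, which additionally uses the ``if and only if'' direction of the Ulrich characterization in Definition 3.2; both are valid.) Part (2) is where you genuinely diverge. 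The paper simply cites [4, Remark 2.2]: the set of witnesses to $\mathfrak{m}^{g-1}$ as an Ulrich ideal is already a non-empty Zariski open subset of $\mathfrak{m}/\mathfrak{m}^2$, and every such witness is a witness to $\Gll(R)$. You instead rebuild this open set from scratch: you take $U$ to be the degree-one nonzerodivisors of $\Gr_{\mathfrak{m}}(R)$, show $U$ is open and non-empty via the associated primes of the (unmixed, Cohen--Macaulay) graded ring together with the infinitude of $k$, and then route lifts of elements of $U$ through superficial-element theory ($x^{*}$ a nonzerodivisor $\Rightarrow$ $x$ superficial $\Rightarrow$ $\ell(R/xR)=e(R)$ in a one-dimensional Cohen--Macaulay ring) and the parenthetical in Definition 3.2 to conclude each such $x$ witnesses $\Gll(R)$. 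Your route is self-contained modulo standard facts from [14] and makes visible exactly where the Cohen--Macaulayness of $\Gr_{\mathfrak{m}}(R)$ enters (non-emptiness of $U$); the paper's route is shorter and yields the possibly larger open set of all Ulrich witnesses, without needing any hypothesis on $\Gr_{\mathfrak{m}}(R)$ for that particular step. Both proofs are sound.
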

\begin{proof}
By Theorem 3.5, we have $\Gll(R)=\Ulr(R)+1$. Let $g=\Gll(R)$. Then $\mathfrak{m}^{g-1}$ is an Ulrich ideal, so by Definition 3.2, there is an element $x \in \mathfrak{m}$ such that $x\mathfrak{m}^{g-1}=\mathfrak{m}^{g}$. In particular, $\mathfrak{m}^{g} \subseteq xR$, so $x$ is a witness to $\Gll(R)$ that generates a reduction of $\mathfrak{m}$. Let $r$ denote the reduction number of $\mathfrak{m}$ with respect to $x$. We see that $r \leq g-1$. Suppose $r<g-1$. We have $\mathfrak{m}^{r+1}=x\mathfrak{m}^{r} \subseteq xR$. Since $r+1<g$, this contradicts the fact that $g=\Gll(R)$. Therefore, $r=g-1=\Ulr(R)$. By [4, Remark 2.2], the set of witnesses $x$ to $\mathfrak{m}^{g-1}$ as Ulrich is a non-empty Zariski open set in $\mathfrak{m}/\mathfrak{m}^2$. Since each of these elements is also a witness to $\Gll(R)$, we see that the set of witnesses to $\Gll(R)$ in $\mathfrak{m}/\mathfrak{m}^2$ contains a non-empty Zariski open set. 
\end{proof}\smallskip
\begin{question}
Let $(R,\mathfrak{m},k)$ be a one-dimensional Cohen-Macaulay local ring with infinite residue field and $\Gr_{\mathfrak{m}}(R)$ Cohen-Macaulay. Is the set of witnesses to $\Gll(R)$ in the vector space $\mathfrak{m}/\mathfrak{m}^2$ a Zariski open set?
\end{question}\smallskip
In what follows, we let $Q$ denote the total ring of fractions of a ring $R$.\smallskip
\begin{remark} Suppose $(R,\mathfrak{m})$ is a one-dimensional Gorenstein local ring. Let $x \in \mathfrak{m}$ be a nonzerodivisor and $I$ an $\mathfrak{m}$-primary ideal of $R$. Then $xR$ is a canonical ideal of $R$, so by [4, Theorem 1.2], $I$ is Elias if and only if $x \in \mathfrak{m}(xR:_Q I)$. If $(R,\mathfrak{m})$ is a one-dimensional Cohen-Macaulay local ring, then we still have that an $\mathfrak{m}$-primary ideal $I$ is Elias if $x \in \mathfrak{m}(xR:_Q I)$ for some nonzerodivisor $x \in \mathfrak{m}$.\end{remark}\smallskip
\begin{proposition}[c.f. 7, Proposition 2.10]
Let $(R,\mathfrak{m})$ be a one-dimensional Cohen-Macaulay local ring and $I$ an $\mathfrak{m}$-primary ideal of $R$. If $x \in \mathfrak{m}(xR:_Q I)$ for some nonzerodivisor $x \in \mathfrak{m}$, then $I$ is Elias.
\end{proposition}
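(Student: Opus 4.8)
The plan is to translate the equality of types into a comparison of socle dimensions and then build an explicit injection out of the hypothesis. I would first record the two type computations. Because $I$ is $\mathfrak{m}$-primary, $R/I$ has finite length, so $\text{type}_R(R/I)=\dim_k\Hom_R(k,R/I)=\dim_k\operatorname{socle}(R/I)$, where $\operatorname{socle}(R/I)=(I:_R\mathfrak{m})/I$. On the other side, $I$ is a maximal Cohen-Macaulay module of depth one, so $\Hom_R(k,I)=0$; applying $\Hom_R(k,-)$ to $0\to I\xrightarrow{\,x\,} I\to I/xI\to 0$ and using that multiplication by $x$ annihilates every $\operatorname{Ext}^i_R(k,-)$ yields $\operatorname{Ext}^1_R(k,I)\cong\operatorname{socle}(I/xI)$, hence $\text{type}_R(I)=\dim_k\operatorname{socle}(I/xI)$. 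I would use the nonzerodivisor $x$ supplied by the hypothesis in this identification so that the two socles are measured against the same element.

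Since the inequality $\text{type}_R(R/I)\le\text{type}_R(I)$ always holds, it suffices to prove $\dim_k\operatorname{socle}(I/xI)\le\dim_k\operatorname{socle}(R/I)$, and I would do this by exhibiting an injective $k$-linear map $\operatorname{socle}(I/xI)\to\operatorname{socle}(R/I)$ given by division by $x$, namely $a+xI\mapsto (a/x)+I$. Granting that every socle representative $a$ lies in $xR$, this map is well defined (as $xI$ is carried into $I$), it lands in $\operatorname{socle}(R/I)$ (if $\mathfrak{m}a\subseteq xI$ then $\mathfrak{m}(a/x)\subseteq I$), and it is injective (if $a/x\in I$ then $a\in xI$, so $a+xI=0$).

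The crux, and the only place the hypothesis enters, is the claim that $a\in I$ with $\mathfrak{m}a\subseteq xI$ forces $a\in xR$. To prove it I would write $x=\sum_i m_i j_i$ with $m_i\in\mathfrak{m}$ and $j_i\in(xR:_Q I)$, so that $j_iI\subseteq xR$. For each $i$ we have $j_i a\in j_iI\subseteq xR$, say $j_i a=xb_i$ with $b_i\in R$; cancelling the nonzerodivisor $x$ from $xa=\sum_i m_i j_i a=x\sum_i m_i b_i$ gives $a=\sum_i m_i b_i$. The socle condition then pins down the $b_i$: from $\mathfrak{m}a\subseteq xI$ we get $\mathfrak{m}(j_i a)=j_i\mathfrak{m}a\subseteq xj_iI\subseteq x^2R$, and dividing by $x$ yields $\mathfrak{m}b_i\subseteq xR$; in particular $m_ib_i\in xR$, so $a=\sum_i m_ib_i\in xR$. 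I expect this lifting step to be the main obstacle, since it is exactly where the colon-ideal hypothesis must be converted into usable containments and where one sees that the socle condition is strong enough to push each $b_i$ into $(xR:_R\mathfrak{m})$. Assembling the three ingredients gives $\text{type}_R(I)\le\text{type}_R(R/I)$, hence equality, so $I$ is Elias.
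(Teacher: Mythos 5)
Your proof is correct, but it follows a genuinely different route from the paper's. The paper's argument is a three-line colon computation in the total quotient ring: from the standard containment $I \subseteq (xR:_Q(xR:_Q I))$ it deduces $(I:_Q\mathfrak{m}) \subseteq (xR:_Q(xR:_Q I)\mathfrak{m})$, and the hypothesis $x\in\mathfrak{m}(xR:_Q I)$ makes that last module land inside $(xR:_Q xR)=R$; the resulting containment $(I:_Q\mathfrak{m})\subseteq R$ is then precisely one of Dao's characterizations of Elias ideals, quoted as a black box from [4, Theorem 1.2]. You instead avoid that characterization entirely: you compute $\text{type}_R(I)=\dim_k\operatorname{socle}(I/xI)$ from the long exact sequence attached to $0\to I\xrightarrow{x}I\to I/xI\to 0$, and produce an explicit division-by-$x$ injection $\operatorname{socle}(I/xI)\hookrightarrow\operatorname{socle}(R/I)$, with the hypothesis used exactly once, in the lifting step showing every socle representative $a$ lies in $xR$ (write $x=\sum_i m_ij_i$ with $j_iI\subseteq xR$, set $j_ia=xb_i$, cancel the nonzerodivisor $x$ to get $a=\sum_i m_ib_i$, and use $\mathfrak{m}a\subseteq xI$ to force $\mathfrak{m}b_i\subseteq xR$). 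I checked the well-definedness, socle-membership, injectivity, and lifting steps, and they are all sound; combined with the standard inequality $\text{type}_R(R/I)\le\text{type}_R(I)$, which the paper also records, this gives equality of types. The trade-off: the paper's route is much shorter and situates the result inside Dao's framework, where $(I:_Q\mathfrak{m})\subseteq R$ is the operative criterion; your route is self-contained, essentially re-proving the relevant direction of that criterion in this special case, and the explicit socle map makes visible the mechanism that the colon manipulations hide.
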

\begin{proof}
Suppose $x \in \mathfrak{m}(xR:_Q I)$. Then
\[
(I:_Q \mathfrak{m}) \subseteq ((xR:_Q (xR:_Q I)):_Q \mathfrak{m})=(xR:_Q (xR:_Q I)\mathfrak{m}) 
\] and 
\[
(xR:_Q (xR:_Q I)\mathfrak{m}) \subseteq (xR:_Q xR) = R.
\] Therefore $(I:_Q \mathfrak{m}) \subseteq R$ and $I$ is Elias by [4, Theorem 1.2].
\end{proof}\smallskip
\begin{remark}
If $R$ is not Gorenstein and $I$ is an Elias ideal of $R$, there may be nonzerodivisors $x \in I$ such that $x \not\in \mathfrak{m}(xR:_Q I)$.
\end{remark}\smallskip
\begin{example}
Let $k$ be a field and let \[R=k\llbracket t^4,t^5, t^{11}\rrbracket \cong k\llbracket a,b,c \rrbracket/(a^4-bc, b^3-ac, c^2-a^3b^2)\] with maximal ideal $\mathfrak{m}=(t^4,t^5,t^{11})R$. In [4, Example 2.8], Dao showed that $\mathfrak{m}^2$ is an Elias ideal of $R$. Consider $(t^{9}R:_Q \mathfrak{m}^2)$. Since $t^9 \in \mathfrak{m}^2$, we have $(t^{9}R:_Q \mathfrak{m}^2)=(t^{9}R:_R \mathfrak{m}^2)$. It is clear that $(t^{9}R:_R \mathfrak{m}^2) \subseteq \mathfrak{m}$. We claim that $(t^{9}R:_R \mathfrak{m}^2) \subseteq \mathfrak{m}^2$. Indeed, we have $t^4 \not\in (t^{9}R:_R \mathfrak{m}^2)$, since $t^4t^8=t^{12} \not\in t^{9}R$. We have $t^5 \not\in (t^{9}R:_R \mathfrak{m}^2)$, since $t^5t^{10}=t^{15} \not\in t^{9}R$. Finally, $t^{11} \not\in (t^{9}R:_R \mathfrak{m}^2)$, since $t^{11}t^8=t^{19} \not\in t^9R$. Therefore, $(t^{9}R:_R \mathfrak{m}^2) \subseteq \mathfrak{m}^2$ and $\mathfrak{m}(t^{9}R:_R \mathfrak{m}^2) \subseteq \mathfrak{m}^3$, so $t^9 \not\in \mathfrak{m}(t^{9}R:_R \mathfrak{m}^2)$. Now consider the ideal $I=(t^8, t^9, t^{15}, t^{16}, t^{22})R$. One can check that $t^5 \in (t^9:_Q I)$, so $t^9 \in \mathfrak{m}(t^9:_Q I)$. Therefore, $I$ is Elias by Proposition 3.11. Of course, $I$ is also Elias because $I \subseteq \mathfrak{m}^2$.
\end{example}\text{}\newline For nonzerodivisors $x \in \mathfrak{m} \setminus \mathfrak{m}^2$, we have the following characterization of ideals $I$ containing $x$ for which $x \in \mathfrak{m}(xR:_Q I)$. The proof follows the argument in the proof of [4, Corollary 1.3 (5)].\text{}\newline
\begin{proposition}[c.f. 4, Corollaries 1.3 and 4.3]
Let $(R,\mathfrak{m})$ be a one-dimensional Cohen-Macaulay local ring. Let $I$ be an $\mathfrak{m}$-primary ideal of $R$ and $x \in \mathfrak{m} \setminus \mathfrak{m}^2$ a nonzerodivisor such that $x \in I$. Then $x \in \mathfrak{m}(xR:_Q I)$ if and only if $I=xR$. 
\end{proposition}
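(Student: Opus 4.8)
The plan is to prove the two implications separately, treating the reverse direction as a quick computation and extracting the content of the forward direction from a single observation about the colon ideal. For the reverse implication, if $I = xR$ then $(xR :_Q I) = (xR :_Q xR)$, and since $x$ is a nonzerodivisor (hence a unit in $Q$) this colon is exactly $R$; therefore $\mathfrak{m}(xR :_Q I) = \mathfrak{m}$, which contains $x$ because $x \in \mathfrak{m} \setminus \mathfrak{m}^2 \subseteq \mathfrak{m}$.

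For the forward implication, write $J := (xR :_Q I)$ and first show that $J$ is in fact an ideal of $R$, not merely an $R$-submodule of $Q$. The point is that $x \in I$: for any $q \in J$ we have $qx \in qI \subseteq xR$, so $qx = xr$ for some $r \in R$, and multiplying by $x^{-1}$ in $Q$ gives $q = r \in R$. Thus $J = (xR :_R I)$ with $xR \subseteq J \subseteq R$; moreover $1 \in J$ holds if and only if $I \subseteq xR$. So the entire statement reduces to proving that the hypothesis $x \in \mathfrak{m}J$ forces $J = R$.

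This last step is where the assumption $x \notin \mathfrak{m}^2$ does the work. If $J \neq R$, then since $R$ is local we have $J \subseteq \mathfrak{m}$, whence $\mathfrak{m}J \subseteq \mathfrak{m}^2$; but $x \in \mathfrak{m}J$ would then give $x \in \mathfrak{m}^2$, contradicting the hypothesis. Hence $J = R$, so $I \subseteq xR$, and combined with $xR \subseteq I$ (immediate from $x \in I$) we conclude $I = xR$. I expect the only subtle point to be the first reduction, namely verifying that $J \subseteq R$, which is exactly where the hypotheses $x \in I$ and the nonzerodivisor property of $x$ are used; once the colon is known to be a genuine ideal of $R$, the minimality of $x$ (i.e.\ $x \notin \mathfrak{m}^2$) finishes the argument by the degree-counting contradiction above.
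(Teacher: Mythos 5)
Your proof is correct and follows essentially the same route as the paper's: both hinge on observing that $x \in I$ makes $(xR:_Q I)$ an honest ideal of $R$, that this ideal being proper (hence contained in $\mathfrak{m}$) would force $x \in \mathfrak{m}(xR:_Q I) \subseteq \mathfrak{m}^2$, contradicting $x \notin \mathfrak{m}^2$, and that the colon equals $R$ exactly when $I \subseteq xR$. The only cosmetic difference is that the paper argues the forward direction via the contrapositive while you argue it directly by contradiction, and you spell out the verification that the colon lies in $R$, which the paper leaves implicit.
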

\begin{proof}
Suppose $xR \subsetneq I$. Since $x \in I$, we have $(xR:_Q I)=(xR:_R I)$. Suppose we have $(xR:_R I) \not\subseteq \mathfrak{m}$. Let $r \in (xR:_R I) \setminus \mathfrak{m}$. Then $rI \subseteq xR$, so $I \subseteq xR$ and $I=xR$, a contradiction. It follows that $(xR:_Q I) \subseteq \mathfrak{m}$ and $\mathfrak{m}(xR:_Q I) \subseteq \mathfrak{m}^2$. Therefore, $x \not\in \mathfrak{m}(xR:_Q I)$. Conversely, if $I=xR$, then $(xR:_Q I)=R$ and $x \in \mathfrak{m}(xR:_Q I)$. 
\end{proof}\text{}
\begin{corollary}
Let $(R,\mathfrak{m})$ be a one-dimensional Cohen-Macaulay local ring. Let $I$ and $J$ be ideals of $R$ such that $I$ is $\mathfrak{m}$-primary and $IJ \subseteq xR$ for some nonzerodivisor $x \in \mathfrak{m}$. If $x \in J\mathfrak{m}$, then $I$ is Elias.
\end{corollary}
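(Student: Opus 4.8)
The plan is to deduce the statement directly from Proposition 3.11, whose hypothesis is precisely that $x \in \mathfrak{m}(xR:_Q I)$ for some nonzerodivisor $x \in \mathfrak{m}$; once that single membership is verified, Proposition 3.11 yields that $I$ is Elias with no further work. So the entire task reduces to translating the two given conditions, $IJ \subseteq xR$ and $x \in J\mathfrak{m}$, into this one colon-ideal membership.

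First I would reinterpret the inclusion $IJ \subseteq xR$ as a containment of $J$ inside a colon ideal. Since $R$ is commutative, for every $j \in J$ we have $jI \subseteq IJ \subseteq xR$, so $j \in (xR:_R I) \subseteq (xR:_Q I)$; hence $J \subseteq (xR:_Q I)$. Multiplying this containment by $\mathfrak{m}$ then gives $\mathfrak{m}J \subseteq \mathfrak{m}(xR:_Q I)$.

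Next I would invoke the second hypothesis. Since $x \in J\mathfrak{m} = \mathfrak{m}J$, the previous containment forces $x \in \mathfrak{m}(xR:_Q I)$. Applying Proposition 3.11 with this nonzerodivisor $x$ then shows that $I$ is Elias, completing the argument.

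I do not expect a genuine obstacle here: the proof is a short formal manipulation of colon ideals, and its content lies entirely in recognizing that $IJ \subseteq xR$ is equivalent to $J \subseteq (xR:_Q I)$. The only point that needs a little care is to carry out the computation inside the total ring of fractions $Q$, so that the colon ideal $(xR:_Q I)$ matches exactly the form appearing in Proposition 3.11; in particular, since we do not assume $x \in I$, it is safest to keep the colon over $Q$ rather than over $R$ throughout.
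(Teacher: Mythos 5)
Your proposal is correct and follows exactly the paper's own argument: deduce $J \subseteq (xR:_Q I)$ from $IJ \subseteq xR$, use $x \in J\mathfrak{m}$ to conclude $x \in \mathfrak{m}(xR:_Q I)$, and then apply Proposition 3.11. The paper's proof is the same three-line deduction, just stated more tersely.
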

\begin{proof}
Suppose $x \in J\mathfrak{m}$. Since $J \subseteq (xR:_Q I)$, we have $x \in \mathfrak{m}(xR:_Q I)$. Therefore, $I$ is Elias by Proposition 3.11.
\end{proof}\text{}
\begin{definition}[6, Definition 2.1]
Let $(R,\mathfrak{m})$ be a local ring. An ideal $I$ of $R$ is {\it{Burch}} if $\mathfrak{m}I \neq \mathfrak{m}(I:_R \mathfrak{m})$.
\end{definition}\text{}
\begin{remark}
Burch ideals are abundant-namely, $I\mathfrak{m}$ is Burch for any nonzero ideal $I$ of a local ring $(R,\mathfrak{m})$. If $R$ has positive depth, then $\mathfrak{m}^t$ is Burch for each $t \geq 1$, and any integrally closed ideal of $R$ is Burch. [6, Example 2.2].
\end{remark}\text{}
\begin{proposition}
Let $(R,\mathfrak{m}, k)$ be a one-dimensional Cohen-Macaulay local ring with infinite residue field. Let $I \neq 0$ be Ulrich and Elias. Then $I$ is Burch.
\end{proposition}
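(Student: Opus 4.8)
The plan is to argue by contradiction, driving everything through a single length identity. Since $k$ is infinite and $I$ is Ulrich, Definition 3.2 lets me fix a witness $x \in \mathfrak{m}$, so that $\mathfrak{m}I = xI$ and $\ell(R/xR) = e(R)$; because $xR$ is then $\mathfrak{m}$-primary, $x$ is a parameter, hence a nonzerodivisor. Write $C = (I :_R \mathfrak{m})$ and suppose, toward a contradiction, that $I$ is \emph{not} Burch, i.e. $\mathfrak{m}C = \mathfrak{m}I$. Combining this with the Ulrich relation gives the identity $\mathfrak{m}C = xI$, which is the engine of the proof.

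Next I would compute $\ell(R/\mathfrak{m}C)$ in two ways and compare. On one hand, $\mathfrak{m}C = xI$ and the exact sequence $0 \to I/xI \to R/xI \to R/I \to 0$, together with the fact that $x$ is a nonzerodivisor on the rank-one $\mathfrak{m}$-primary ideal $I$ (so $\ell(I/xI) = e(x;I) = e(x;R) = \ell(R/xR) = e(R)$ by associativity of multiplicity), give $\ell(R/\mathfrak{m}C) = \ell(R/I) + e(R)$. On the other hand, $\ell(R/\mathfrak{m}C) = \ell(R/C) + \mu(C)$, and since $C/I \cong \Hom_R(k, R/I)$ is the socle of $R/I$, one has $\ell(R/C) = \ell(R/I) - \text{type}_R(R/I)$. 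Equating the two expressions and cancelling $\ell(R/I)$ yields $\mu(C) = e(R) + \text{type}_R(R/I)$.

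The contradiction then comes from the standard bound on the number of generators of an $\mathfrak{m}$-primary ideal: since $xC \subseteq \mathfrak{m}C$ and $\ell(C/xC) = \ell(R/xR) = e(R)$ (again rank one), we get $\mu(C) = \ell(C/\mathfrak{m}C) \le \ell(C/xC) = e(R)$. But $R/I \neq 0$ forces $\text{type}_R(R/I) \ge 1$, so the identity above gives $\mu(C) > e(R)$, which is absurd; hence $I$ is Burch. The step to watch most carefully is the multiplicity bookkeeping in the two identities $\ell(I/xI) = e(R)$ and $\mu(C) \le e(R)$: both depend on $I$ and $C$ being rank-one maximal Cohen–Macaulay modules, which is what makes $x$ act as a nonzerodivisor and collapses the associativity formula to $\ell(-/x\cdot-) = \ell(R/xR) = e(R)$. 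I would also dispatch the degenerate case $C = R$ (equivalently $I = \mathfrak{m}$) at the outset, where not-Burch reads $\mathfrak{m} = x\mathfrak{m}$ and is excluded immediately by Nakayama. I note that this length count in fact invokes only the Ulrich hypothesis; the Elias hypothesis can alternatively be used through the characterization $(I :_Q \mathfrak{m}) \subseteq R$ of [4, Theorem 1.2], which with $\mathfrak{m}C = xI$ forces $(I :_R \mathfrak{m}) \subseteq xR$ and hence $I \subseteq xR$, but the multiplicity argument already closes the proof.
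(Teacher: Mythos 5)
Your proof is correct, but it takes a genuinely different route from the paper's, and in fact it proves more. The paper works purely with colon ideals: assuming $I$ is not Burch, it obtains $(I:_R\mathfrak{m})=(\mathfrak{m}I:_R\mathfrak{m})=(xI:_R\mathfrak{m})$ from [6, Proposition 2.3], uses the Elias hypothesis via [4, Theorem 1.2] to rewrite this as $x(I:_R\mathfrak{m})$, and then kills $(I:_R\mathfrak{m})$ by Nakayama. You instead run a length count on the identity $\mathfrak{m}C=xI$, where $C=(I:_R\mathfrak{m})$: the two evaluations of $\ell(R/\mathfrak{m}C)$ give $\mu(C)=e(R)+\operatorname{type}_R(R/I)$, contradicting the bound $\mu(C)\le\ell(C/xC)=e(R)$. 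The multiplicity bookkeeping checks out; incidentally, the cleanest justification of $\ell(I/xI)=\ell(C/xC)=\ell(R/xR)$ is the isomorphism $xR/xJ\cong R/J$ for $J=I,C$ (compare the two filtrations $R\supseteq J\supseteq xJ$ and $R\supseteq xR\supseteq xJ$), which avoids any appeal to rank or the associativity formula. The notable payoff, which you point out yourself, is that the Elias hypothesis is never used: your argument shows that \emph{every} nonzero Ulrich ideal in this setting is Burch, a strictly stronger statement that also renders the hypothesis $I^2\neq I(I:_R\mathfrak{m})$ in Proposition 3.19 superfluous. Your key identity even closes the proof faster than the length count: since $xC\subseteq\mathfrak{m}C=\mathfrak{m}I=xI$ and $x$ is a nonzerodivisor, cancelling $x$ gives $C\subseteq I$, which is impossible because $C/I$ is the socle of the nonzero finite-length module $R/I$. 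In sum, the paper's proof is shorter on the page but leans on two external characterizations and genuinely needs both hypotheses; yours is self-contained modulo standard facts and reveals that the Ulrich hypothesis alone suffices.
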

\begin{proof}
Suppose $I$ is not Burch. Since $I$ is Ulrich and $k$ is infinite, we have $\mathfrak{m}I=xI$ for some nonzerodivisor $x \in \mathfrak{m}$. Since $I$ is not Burch, by [6, Proposition 2.3] we have 
\[
(I:_R \mathfrak{m})=(\mathfrak{m}I:_R \mathfrak{m})=(xI:_R \mathfrak{m}).
\] Since $I$ is Elias, we have $(xI:_R \mathfrak{m})=x(I:_R \mathfrak{m})$ by [4, Theorem 1.2]. Therefore, we have $(I:_R \mathfrak{m})=x(I:_R \mathfrak{m})$. So $(I:_R \mathfrak{m})=0$ and $I=0$, a contradiction.
\end{proof}\text{}
\begin{proposition}
Let $(R,\mathfrak{m},k)$ be a one-dimensional Cohen-Macaulay local ring with infinite residue field. Let $I$ be an Ulrich ideal of $R$. If $I^2 \neq I(I:_R \mathfrak{m})$, then $I$ is Burch.
\end{proposition}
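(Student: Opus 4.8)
The plan is to argue by contraposition: assuming $I$ is Ulrich but \emph{not} Burch, I will show $I^2 = I(I :_R \mathfrak{m})$, which is precisely the negation of the hypothesis. Writing $J = (I :_R \mathfrak{m})$, note that $\mathfrak{m}I \subseteq I$ gives $I \subseteq J$ for free, hence $I^2 \subseteq IJ$; the whole content of the target equality is the reverse containment $IJ \subseteq I^2$, and I expect the Ulrich witness to collapse this completely.

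First I would extract the witness: since $I$ is Ulrich and $k$ is infinite, Definition 3.2 supplies a nonzerodivisor $x \in \mathfrak{m}$ with $\mathfrak{m}I = xI$. Next, because $I$ is not Burch, I invoke [6, Proposition 2.3] — the same input used in the proof of Proposition 3.18 — to obtain $J = (I :_R \mathfrak{m}) = (\mathfrak{m}I :_R \mathfrak{m})$, and substituting $\mathfrak{m}I = xI$ rewrites this as $J = (xI :_R \mathfrak{m})$.

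The key step is to exploit this last description of $J$ together with the fact that $x$ itself lies in $\mathfrak{m}$. From $J = (xI :_R \mathfrak{m})$ one has $\mathfrak{m}J \subseteq xI$, and since $x \in \mathfrak{m}$ this yields $xJ \subseteq \mathfrak{m}J \subseteq xI$. As $x$ is a nonzerodivisor, cancelling $x$ forces $J \subseteq I$; combined with the automatic inclusion $I \subseteq J$, this gives $I = J = (I :_R \mathfrak{m})$. Consequently $I^2 = IJ = I(I :_R \mathfrak{m})$, contradicting the hypothesis and completing the contrapositive.

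The main obstacle is conceptual rather than computational: one must recognize that the Ulrich and non-Burch hypotheses together do not merely relate $I^2$ to $I(I :_R \mathfrak{m})$ but in fact pin $(I :_R \mathfrak{m})$ down to $I$ itself, at which point the conclusion is immediate. The only point I would verify carefully is that the hypotheses of [6, Proposition 2.3] are met — positive depth, one-dimensional Cohen-Macaulay — so that the identity $(I :_R \mathfrak{m}) = (\mathfrak{m}I :_R \mathfrak{m})$ is legitimately available; these hold by assumption.
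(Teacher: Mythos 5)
Your proof is correct, but it takes a genuinely different route from the paper's, and it actually proves more. The paper's own argument never invokes [6, Proposition 2.3] for this proposition: it simply multiplies the non-Burch equality $\mathfrak{m}I=\mathfrak{m}(I:_R\mathfrak{m})$ by $I$ and uses the Ulrich witness twice to obtain
\[
xI^2=\mathfrak{m}I^2=I\,\mathfrak{m}(I:_R\mathfrak{m})=xI(I:_R\mathfrak{m}),
\]
then cancels the nonzerodivisor $x$ to conclude $I^2=I(I:_R\mathfrak{m})$. You instead use the colon characterization of non-Burch ideals to identify $(I:_R\mathfrak{m})=(\mathfrak{m}I:_R\mathfrak{m})=(xI:_R\mathfrak{m})$, and then cancel $x$ at the level of a containment, $x(I:_R\mathfrak{m})\subseteq\mathfrak{m}(I:_R\mathfrak{m})\subseteq xI$, landing on the strictly stronger conclusion $(I:_R\mathfrak{m})=I$. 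It is worth seeing what this buys: since an Ulrich ideal is by definition $\mathfrak{m}$-primary (hence proper), $R/I$ is a nonzero Artinian local ring, so its socle $(I:_R\mathfrak{m})/I$ is nonzero and the equality $(I:_R\mathfrak{m})=I$ you derive is impossible. Your argument therefore shows that ``Ulrich and not Burch'' is contradictory outright; that is, every Ulrich ideal in this setting is Burch, so the hypothesis $I^2\neq I(I:_R\mathfrak{m})$ here (and likewise the Elias hypothesis in Proposition 3.18) is never actually needed. Moreover, you do not even need [6, Proposition 2.3] to reach this: non-Burch and Ulrich give $\mathfrak{m}(I:_R\mathfrak{m})=\mathfrak{m}I=xI$ directly, whence $x(I:_R\mathfrak{m})\subseteq xI$ and $(I:_R\mathfrak{m})\subseteq I$. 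The paper's computation, by contrast, is self-contained (it uses only the definition of Burch) but stops at the stated equality of products and does not detect this strengthening.
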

\begin{proof}
Suppose $I$ is not Burch. Then $\mathfrak{m}I=\mathfrak{m}(I:_R \mathfrak{m})$. Since $I$ is Ulrich and $k$ is infinite, we have $Ix=I\mathfrak{m}$ for some nonzerodivisor $x \in \mathfrak{m}$. Therefore, 
\[
I^{2}x=I^2\mathfrak{m}=I\mathfrak{m}(I:_R\mathfrak{m})=xI(I:_R \mathfrak{m}).
\] It follows that $I^2=I(I:_R\mathfrak{m})$.
\end{proof}\text{}
\begin{example}
Let $k$ be a field and $R=k\llbracket t^4, t^6, t^7 \rrbracket \cong k\llbracket x,y,z \rrbracket/(x^3-y^2, z^2-x^2y)$ with maximal ideal $\mathfrak{m}=(t^4,t^6,t^7)R$. Let $I=(t^4,t^6)R$. Then $t^{4}I=\mathfrak{m}I=(t^8,t^{10})R$, so $I$ is Ulrich. Since $t^{7}\mathfrak{m}=(t^{11},t^{13},t^{14})R \subseteq I$, we have $t^{7} \in (I:_R \mathfrak{m})$ and $t^{11} \in I(I:_R \mathfrak{m})$. However, $t^{11} \not\in I^2=(t^8,t^{10})R$. Therefore, $I$ is Burch by Proposition 3.19.
\end{example}\text{}\newline
For two ideals $I$ and $J$, Dao proved the following criterion for when $I+J\mathfrak{m}$ is Burch.\newline
\begin{proposition}[5, Proposition 3.3] Let $(R,\mathfrak{m})$ be a local ring with ideals $I$ and $J$ such that $J\mathfrak{m} \not\subseteq I\mathfrak{m}$. Then $I+J\mathfrak{m}$ is Burch.
\end{proposition}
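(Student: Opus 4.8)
The plan is to work by contradiction straight from the definition, so I would set $K := I + J\mathfrak{m}$ and assume that $K$ is \emph{not} Burch, i.e. that $\mathfrak{m}K = \mathfrak{m}(K :_R \mathfrak{m})$, with the goal of producing the forbidden containment $J\mathfrak{m} \subseteq I\mathfrak{m}$. The first move is to record the one inclusion that is essentially free: since $\mathfrak{m}J = J\mathfrak{m} \subseteq I + J\mathfrak{m} = K$, we have $J \subseteq (K :_R \mathfrak{m})$, and therefore $\mathfrak{m}J \subseteq \mathfrak{m}(K :_R \mathfrak{m})$. This is the step that channels the hypothesis about $J$ into the (negated) Burch condition.

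Next I would expand the other side of the assumed equality. Computing directly, $\mathfrak{m}K = \mathfrak{m}(I + J\mathfrak{m}) = \mathfrak{m}I + \mathfrak{m}^2 J$, so the non-Burch assumption upgrades the free inclusion above to
\[
\mathfrak{m}J \subseteq \mathfrak{m}(K :_R \mathfrak{m}) = \mathfrak{m}K = \mathfrak{m}I + \mathfrak{m}^2 J .
\]
At this point everything has been reduced to extracting $\mathfrak{m}J \subseteq \mathfrak{m}I$ from this single containment.

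The crux, which I expect to be the only genuine content, is a Nakayama argument, and the thing to get right is choosing the module that makes it transparent. I would pass to the finitely generated $R$-module $N := (\mathfrak{m}J + \mathfrak{m}I)/\mathfrak{m}I$, the image of $\mathfrak{m}J$ in $R/\mathfrak{m}I$; then $\mathfrak{m}N$ is precisely the image of $\mathfrak{m}^2 J$, and the displayed containment reads exactly as $N \subseteq \mathfrak{m}N$. Since $\mathfrak{m}N \subseteq N$ always holds, this forces $N = \mathfrak{m}N$, so Nakayama's lemma gives $N = 0$, that is $\mathfrak{m}J \subseteq \mathfrak{m}I$. This contradicts the standing hypothesis $J\mathfrak{m} \not\subseteq I\mathfrak{m}$, and hence $K = I + J\mathfrak{m}$ must be Burch. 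The remaining work is purely the bookkeeping of identifying $N$ with the right subquotient so that the Nakayama hypothesis falls out of the containment automatically.
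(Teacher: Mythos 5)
Your proof is correct. Note that the paper itself gives no proof of this statement; it is quoted verbatim as Proposition 3.3 of Dao's paper \emph{On colon operations and special types of ideals} (reference [5]), so there is nothing internal to compare against. Your argument --- observing $J \subseteq (K :_R \mathfrak{m})$ for $K = I + J\mathfrak{m}$, expanding $\mathfrak{m}K = \mathfrak{m}I + \mathfrak{m}^2 J$, and then applying Nakayama to the finitely generated module $(\mathfrak{m}J + \mathfrak{m}I)/\mathfrak{m}I$ to extract $\mathfrak{m}J \subseteq \mathfrak{m}I$ --- is exactly the standard route to this result, and every step checks out under the paper's standing Noetherian hypothesis (which is what makes the Nakayama module finitely generated, a point you implicitly rely on and would do well to flag).
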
\text{}\newline
In the following, we write $(I:J)$ to denote $(I:_R J)$ for ideals $I$ and $J$ of $R$.\newline
\begin{proposition}
Let $(R,\mathfrak{m})$ be a local ring with positive depth. Let $I$ and $J$ be ideals of $R$ such that $IJ\subseteq xR$ for some nonzerodivisor $x \in \mathfrak{m}$. If $J\mathfrak{m} \nsubseteq I\mathfrak{m}$, then $I$ is Burch.
\end{proposition}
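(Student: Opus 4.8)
The plan is to argue by contradiction: assume $I$ is not Burch and try to derive $J\mathfrak{m}\subseteq I\mathfrak{m}$, contradicting the hypothesis $J\mathfrak{m}\nsubseteq I\mathfrak{m}$. The natural starting point is the colon reformulation of the Burch condition already invoked in the proof of Proposition 3.18: by [6, Proposition 2.3], $I$ fails to be Burch precisely when $(I:_R\mathfrak{m})=(\mathfrak{m}I:_R\mathfrak{m})$, equivalently when $\mathfrak{m}(I:_R\mathfrak{m})=\mathfrak{m}I$. So I would assume this equality and hold it in reserve.

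Next I would translate the product hypothesis into a divisibility statement, exactly as in the proofs of Proposition 3.11 and Corollary 3.15. Since $IJ\subseteq xR$, we have $J\subseteq(xR:_R I)$, and because $x$ is a nonzerodivisor we may divide by it inside the total ring of fractions $Q$: the containment $IJ\subseteq xR$ says precisely that $\tfrac{1}{x}IJ\subseteq R$, i.e. $\tfrac{1}{x}J\subseteq(R:_Q I)$. This is the device by which the single nonzerodivisor $x$ is meant to convert the product condition into information about the dual $(R:_Q I)$.

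The heart of the argument is then a reduction: it suffices to prove $J\subseteq(I:_R\mathfrak{m})$, that is, $J\mathfrak{m}\subseteq I$. Indeed, granting this, the not-Burch assumption yields
\[
J\mathfrak{m}\subseteq(I:_R\mathfrak{m})\mathfrak{m}=\mathfrak{m}(I:_R\mathfrak{m})=\mathfrak{m}I=I\mathfrak{m},
\]
which contradicts $J\mathfrak{m}\nsubseteq I\mathfrak{m}$. Thus the whole problem collapses to promoting the product containment $IJ\subseteq xR$ to the containment $J\mathfrak{m}\subseteq I$, making essential use both of the fact that $x$ is a nonzerodivisor in $\mathfrak{m}$ and of the structural equality $\mathfrak{m}(I:_R\mathfrak{m})=\mathfrak{m}I$ supplied by the assumption that $I$ is not Burch.

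I expect this final step to be the main obstacle, and the place where the argument must be handled most carefully. The difficulty is that $IJ\subseteq xR$ by itself does not force $J\mathfrak{m}\subseteq I$; one genuinely has to feed in the not-Burch structure. Concretely, for $b\in J$ one has $bI\subseteq xR$, hence $\tfrac{1}{x}bI\subseteq R$, and the target $b\mathfrak{m}\subseteq I$ must be extracted by playing this against $\mathfrak{m}(I:_R\mathfrak{m})=\mathfrak{m}I$. An alternative framing of the same crux is to invoke Proposition 3.21, which already gives that $I+J\mathfrak{m}$ is Burch, and then to transfer Burchness down from $I+J\mathfrak{m}$ to $I$; here the intended role of $x$ is to guarantee that the extra generators in $J\mathfrak{m}$ collapse into $I$ modulo the relevant colon. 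Either way, the technical core is the interaction between the nonzerodivisor $x$ and the colon ideal $(I:_R\mathfrak{m})$, and that is the step on which I would concentrate.
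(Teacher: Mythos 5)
You have correctly reproduced the paper's framework — assume $I$ is not Burch, invoke the reformulation $\mathfrak{m}(I:_R\mathfrak{m})=\mathfrak{m}I$, and reduce everything to a single containment ($J\subseteq(I:_R\mathfrak{m})$, i.e.\ $J\mathfrak{m}\subseteq I$, in your version; the paper aims for the slightly weaker $(J:_R\mathfrak{m})\subseteq(I:_R\mathfrak{m})$, which also suffices). But that containment, which you yourself call the heart of the argument, is never proved: both routes you sketch (playing $\tfrac{1}{x}J\subseteq(R:_Q I)$ against the non-Burch equality, or descending Burchness from $I+J\mathfrak{m}$ via Proposition 3.21) are left as declared hopes. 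So as written the proposal has a genuine gap precisely at the crux.

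Your caution about that step is, however, vindicated in the strongest possible way: the step cannot be completed, because the proposition itself is false, and the paper's own Example 3.23 refutes it. Take $R=k\llbracket t^4,t^6,t^7\rrbracket$, $I=(t^7,t^8)R$, $J=t^4R$, $x=t^4$. The hypotheses hold: $IJ\subseteq t^4R$ and $J\mathfrak{m}=(t^8,t^{10},t^{11})R\not\subseteq I\mathfrak{m}$. Yet $I$ is not Burch: one checks $(I:_R\mathfrak{m})=I+t^{10}R$ (the only new monomial is $t^{10}$, since $t^{10}\mathfrak{m}=(t^{14},t^{16},t^{17})R\subseteq I$), and $\mathfrak{m}I$ contains every monomial of order at least $11$, so $\mathfrak{m}t^{10}\subseteq\mathfrak{m}I$ and hence $\mathfrak{m}(I:_R\mathfrak{m})=\mathfrak{m}I$. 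The paper's proof fails at exactly the point you flagged: it asserts the identity $(x:\mathfrak{m}(I:\mathfrak{m}))=(x(I:\mathfrak{m}):\mathfrak{m})$, of which only the inclusion $\supseteq$ holds in general (if $r\mathfrak{m}\subseteq x(I:\mathfrak{m})$ then $r\mathfrak{m}(I:\mathfrak{m})\subseteq xR$); the inclusion $\subseteq$ that the chain of containments actually requires is invalid. Indeed, since $x$ always lies in $(x:I\mathfrak{m})$, the paper's chain would force $x\mathfrak{m}\subseteq I$ for every non-Burch ideal $I$ and every nonzerodivisor $x\in\mathfrak{m}$ — and in the example above $t^4t^6=t^{10}\notin I$. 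So the correct resolution of this statement is a counterexample, not a completed proof, and no amount of work on the step you isolated could have closed the gap.
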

\begin{proof}
Since $J \subseteq (x:I)$, we have the following.
\[
(J:\mathfrak{m})\subseteq ((x:I):\mathfrak{m})=(x:I\mathfrak{m}).
\] Suppose $I$ is not Burch. Then $I\mathfrak{m}=\mathfrak{m}(I:\mathfrak{m})$, and
\[
(x:I\mathfrak{m})=(x:\mathfrak{m}(I:\mathfrak{m}))=(x(I:\mathfrak{m}):\mathfrak{m}).
\] Since $x(I:\mathfrak{m}) \subseteq (xI:\mathfrak{m})$, we then have 
\[
(x(I:\mathfrak{m}):\mathfrak{m}) \subseteq ((xI:\mathfrak{m}):\mathfrak{m})=(xI:\mathfrak{m}^2) \subseteq (xI:x\mathfrak{m})=(I:\mathfrak{m}).
\] It follows that $(J:\mathfrak{m}) \subseteq (I:\mathfrak{m})$. Therefore, 
\[
\mathfrak{m}J \subseteq \mathfrak{m}(J:\mathfrak{m}) \subseteq \mathfrak{m}(I:\mathfrak{m})=\mathfrak{m}I.
\]
\end{proof}
\begin{example}
Let $k$ be a field and $R=k\llbracket t^4,t^6,t^7 \rrbracket$ with $\mathfrak{m}=(t^4,t^6,t^7)R$. Let $I=(t^7,t^8)R$ and $J=t^{4}R$. Then $IJ \subseteq t^{4}R$. Since $J\mathfrak{m}=(t^8,t^{10},t^{11})R$ and $I\mathfrak{m}=(t^{11},t^{12},t^{13},t^{14})R$, we have $J\mathfrak{m} \not\subseteq I\mathfrak{m}$. By Proposition 3.22, the ideal $I$ is Burch.
\end{example}\text{}
\begin{corollary}
Let $(R,\mathfrak{m})$ be a one-dimensional Cohen-Macaulay local ring. Let $I$ and $J$ be $\mathfrak{m}$-primary ideals of $R$ such that $IJ \subseteq xR$ for some nonzerodivisor $x \in \mathfrak{m}$. If $I$ is not Burch and $x \in J\mathfrak{m}$, then $I$ is Elias and $J$ is Elias.
\end{corollary}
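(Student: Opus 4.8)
The plan is to obtain both Elias conclusions from the criterion in Corollary 3.15, using the non-Burch hypothesis together with Proposition 3.22 to unlock the second one. The statement is essentially a bookkeeping combination of two earlier results, so the real work is identifying which hypothesis feeds which citation.

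First, observe that the assertion ``$I$ is Elias'' is immediate. We are given that $I$ is $\mathfrak{m}$-primary, that $IJ \subseteq xR$, and that $x \in J\mathfrak{m}$, which are precisely the hypotheses of Corollary 3.15 applied to the pair $(I,J)$. Hence $I$ is Elias, and notably this step does not even use that $I$ is not Burch.

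For the claim that $J$ is Elias, the idea is to run the same criterion with the roles of $I$ and $J$ interchanged. Since $JI = IJ \subseteq xR$ and $J$ is $\mathfrak{m}$-primary, Corollary 3.15 applied to the pair $(J,I)$ will yield that $J$ is Elias as soon as we know $x \in I\mathfrak{m}$. To produce this containment I would invoke Proposition 3.22 in contrapositive form: a one-dimensional Cohen-Macaulay local ring has depth one, hence positive depth, so the hypotheses of that proposition are in force, and since $I$ is assumed not Burch its conclusion must fail, which forces $J\mathfrak{m} \subseteq I\mathfrak{m}$. Combining this with the given $x \in J\mathfrak{m}$ yields $x \in J\mathfrak{m} \subseteq I\mathfrak{m}$, so $x \in I\mathfrak{m}$, and Corollary 3.15 then gives that $J$ is Elias.

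The only genuine content is the implication $x \in J\mathfrak{m} \Rightarrow x \in I\mathfrak{m}$, and this is exactly where the non-Burch hypothesis on $I$ enters, via the contrapositive of Proposition 3.22; everything else is a direct appeal to Corollary 3.15. The main point to verify carefully is that the asymmetry of Corollary 3.15 (which requires only the first ideal to be $\mathfrak{m}$-primary) is respected in both applications: this holds because $I$ and $J$ are both $\mathfrak{m}$-primary, so whichever one plays the role of the first argument satisfies the hypothesis. I do not anticipate any further obstacle.
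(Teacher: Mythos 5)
Your proposal is correct and follows essentially the same route as the paper's own proof: Corollary 3.15 applied to $(I,J)$ gives that $I$ is Elias, the contrapositive of Proposition 3.22 (valid since a one-dimensional Cohen--Macaulay local ring has positive depth) converts the non-Burch hypothesis into $J\mathfrak{m} \subseteq I\mathfrak{m}$, hence $x \in I\mathfrak{m}$, and Corollary 3.15 applied to $(J,I)$ gives that $J$ is Elias. Your explicit remark that the non-Burch hypothesis is not needed for the first conclusion is a correct observation that the paper leaves implicit.
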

\begin{proof}
Since $I$ is not Burch, we have $J\mathfrak{m} \subseteq I\mathfrak{m}$ by Proposition 3.22. Since $x \in J\mathfrak{m}$, we have $x \in I\mathfrak{m}$ as well. It follows from Corollary 3.15 that $I$ and $J$ are both Elias.
\end{proof}\text{}\newline
By strengthening the hypotheses in Corollary 3.24, we obtain the following criterion for when an ideal is not Burch.\newline
\begin{proposition}
Let $(R,\mathfrak{m})$ be a local ring with positive depth. Let $I$ and $J$ be ideals of $R$ such that $IJ \subseteq x^{2}R$ for some nonzerodivisor $x \in \mathfrak{m}$. If $x \in I \cap J\mathfrak{m}$, then $I$ is not Burch. 
\end{proposition}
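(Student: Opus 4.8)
The plan is to verify directly the equality that characterises $I$ as \emph{not} Burch, namely $\mathfrak{m}I = \mathfrak{m}(I:_R\mathfrak{m})$. Since $I \subseteq (I:_R\mathfrak{m})$ we automatically have $\mathfrak{m}I \subseteq \mathfrak{m}(I:_R\mathfrak{m})$, so the entire content lies in proving the reverse inclusion $\mathfrak{m}(I:_R\mathfrak{m}) \subseteq \mathfrak{m}I$. My strategy is to first establish the structural containment $(I:_R\mathfrak{m}) \subseteq xR$, and then to absorb the extra factor of $x$ back into $I$ using the hypothesis $x \in I$.

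For the key step I would take an arbitrary $u \in (I:_R\mathfrak{m})$ and show $u \in xR$. Using $x \in J\mathfrak{m}$, write $x = \sum_k j_k m_k$ with $j_k \in J$ and $m_k \in \mathfrak{m}$, and compute $ux = \sum_k j_k(u m_k)$. Since $u\mathfrak{m} \subseteq I$, each factor $u m_k$ lies in $I$, so every summand $j_k(u m_k)$ lies in $JI = IJ \subseteq x^2 R$; hence $ux \in x^2 R$. Writing $ux = x^2 r$ gives $x(u - xr) = 0$, and because $x$ is a nonzerodivisor this forces $u = xr \in xR$. Thus $(I:_R\mathfrak{m}) \subseteq xR$, from which $\mathfrak{m}(I:_R\mathfrak{m}) \subseteq \mathfrak{m}(xR) = x\mathfrak{m}$. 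Finally, since $x \in I$ we have $x\mathfrak{m} \subseteq I\mathfrak{m} = \mathfrak{m}I$, so $\mathfrak{m}(I:_R\mathfrak{m}) \subseteq \mathfrak{m}I$ and the desired equality holds.

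The one genuinely delicate point is the key step, and the crux is to exploit $x \in J\mathfrak{m}$ rather than merely $x \in J$: expressing $x$ as a sum of products $j_k m_k$ lets the factors $m_k$ pull a general element $u$ of $(I:_R\mathfrak{m})$ back into $I$, which is exactly what places $ux$ inside $IJ$. The strengthened containment $IJ \subseteq x^2R$ (as opposed to the $xR$ of Corollary 3.24) is then precisely what is needed so that, after cancelling a single nonzerodivisor $x$, the element $u$ still lands in $xR$ rather than merely in $R$. In effect this is the cancellation mechanism of Proposition 3.22 and Corollary 3.24 run in the rigid direction that yields the non-Burch conclusion outright, instead of under a standing assumption that $I$ is not Burch.
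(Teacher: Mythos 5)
Your proof is correct and is essentially the paper's own argument: your element-wise computation showing $(I:_R\mathfrak{m}) \subseteq xR$ is exactly the paper's colon-ideal chain $(I:\mathfrak{m}) \subseteq ((x^2:J):\mathfrak{m}) = (x^2:J\mathfrak{m}) \subseteq (x^2:x) = xR$ unpacked at the level of elements, with the hypotheses $IJ \subseteq x^2R$, $x \in J\mathfrak{m}$, and the nonzerodivisor cancellation used at the same points. The only cosmetic difference is the finish: the paper uses $xR \subseteq I$ to get the slightly stronger conclusion $(I:\mathfrak{m}) = I$, whereas you multiply by $\mathfrak{m}$ first; both yield $\mathfrak{m}I = \mathfrak{m}(I:\mathfrak{m})$ immediately.
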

\begin{proof}
Since $J \subseteq (x^2:I)$, we have the following.
\[
I \subseteq (I:\mathfrak{m}) \subseteq ((x^2:(x^2: I)):\mathfrak{m}) \subseteq ((x^2: J): \mathfrak{m})
\] and \[((x^2: J): \mathfrak{m})=(x^2: J\mathfrak{m}) \subseteq (x^2: x)=xR \subseteq I.
\] Therefore, $(I: \mathfrak{m})=I$ and $\mathfrak{m}I=\mathfrak{m}(I:\mathfrak{m})$, so $I$ is not Burch. 
\end{proof}\smallskip\smallskip
\begin{corollary}
Let $(R,\mathfrak{m},k)$ be a one-dimensional Cohen-Macaulay local ring. Let $I$ and $J$ be $\mathfrak{m}$-primary ideals and $x \in \mathfrak{m}$ a nonzerodivisor such that $IJ \subseteq x^{2}R$.
\begin{enumerate}
\item If $x \in I \cap J\mathfrak{m}$, then $I$ and $J$ are Elias. If also $k$ is infinite, then $I$ is not Ulrich. 
\item If $x \in J \cap I\mathfrak{m}$, then $I$ and $J$ are Elias. If also $k$ is infinite, then $J$ is not Ulrich.
\end{enumerate}
\end{corollary}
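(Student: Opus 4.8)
The plan is to assemble this corollary from three results already established in this section: the Elias criterion of Corollary 3.15, the non-Burch criterion of Proposition 3.26, and the contrapositive of Proposition 3.18. The single observation that drives everything is that, in both cases (1) and (2), the square $x^2$ lands in both $I\mathfrak{m}$ and $J\mathfrak{m}$. Indeed, in case (1) we have $x \in I$ and $x \in \mathfrak{m}$, so $x^2 = x\cdot x \in I\mathfrak{m}$; and $x \in J\mathfrak{m}$ forces $x^2 = x\cdot x \in J\mathfrak{m}$ since $J\mathfrak{m}$ is an ideal. Case (2) is the mirror image: $x \in J$ and $x \in \mathfrak{m}$ give $x^2 \in J\mathfrak{m}$, while $x \in I\mathfrak{m}$ gives $x^2 \in I\mathfrak{m}$. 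Thus in either case $x^2 \in I\mathfrak{m} \cap J\mathfrak{m}$, and $x^2$ is a nonzerodivisor because $x$ is.

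Next I would deduce that both $I$ and $J$ are Elias. Since $IJ = JI \subseteq x^2 R$ with $x^2$ a nonzerodivisor, I apply Corollary 3.15 twice. Taking the distinguished $\mathfrak{m}$-primary ideal to be $I$ (and the auxiliary ideal to be $J$), the containment $x^2 \in J\mathfrak{m}$ yields that $I$ is Elias; taking the distinguished ideal to be $J$ instead, the containment $x^2 \in I\mathfrak{m}$ yields that $J$ is Elias. This settles the Elias assertions in both (1) and (2) uniformly.

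For the non-Ulrich conclusions I would invoke Proposition 3.26. In case (1) the hypothesis $x \in I \cap J\mathfrak{m}$ is exactly what Proposition 3.26 requires, so $I$ is not Burch; in case (2) the hypothesis $x \in J \cap I\mathfrak{m}$ is Proposition 3.26 with the roles of $I$ and $J$ interchanged, so $J$ is not Burch. Assuming now that $k$ is infinite, I combine this with Proposition 3.18. Since $I$ (resp. $J$) is $\mathfrak{m}$-primary it is nonzero, and we have just shown it is Elias but not Burch; were it also Ulrich, Proposition 3.18 would force it to be Burch, a contradiction. Hence $I$ is not Ulrich in case (1), and $J$ is not Ulrich in case (2).

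The argument is almost entirely a matter of citing the right prior result with the right ideal in the distinguished role, so I do not expect a serious obstacle. The only points requiring care are the bookkeeping that certifies $x^2 \in I\mathfrak{m} \cap J\mathfrak{m}$ from the asymmetric hypotheses $x \in I \cap J\mathfrak{m}$ (resp. $x \in J \cap I\mathfrak{m}$), and, in applying the contrapositive of Proposition 3.18, keeping straight that "Elias and not Burch" rules out "Ulrich" only after one records that the ideal in question is nonzero.
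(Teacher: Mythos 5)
Your proof is correct, but it takes a slightly different route than the paper's. The paper proves (1) by first invoking Proposition 3.25 to conclude that $I$ is not Burch, and then feeds that non-Burch property into Corollary 3.24 (whose hypotheses $IJ \subseteq xR$, $I$ not Burch, $x \in J\mathfrak{m}$ are all in place, since $x^2R \subseteq xR$) to obtain that both $I$ and $J$ are Elias; the non-Ulrich conclusion then follows from Proposition 3.18 exactly as you argue, and (2) is the symmetric case. You instead bypass the Burch machinery entirely for the Elias assertions: from $x \in I$ and $x \in J\mathfrak{m}$ (resp.\ the mirrored hypotheses) you record that $x^2 \in I\mathfrak{m} \cap J\mathfrak{m}$, and you apply Corollary 3.15 twice with the nonzerodivisor $x^2$ playing the role of $x$. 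This is a genuine, if modest, simplification of the logical structure: in your version ``Elias'' and ``not Burch'' are independent consequences of the hypotheses, combined only at the final step via the contrapositive of Proposition 3.18, whereas in the paper the Elias conclusion is routed through the non-Burch one (Corollary 3.24 itself rests on Proposition 3.22 plus Corollary 3.15). Your version also makes visible that the Elias conclusions in (1) and (2) are literally the same statement, with only the non-Ulrich conclusions distinguishing the two cases; and your bookkeeping --- $x^2$ is a nonzerodivisor, both $I$ and $J$ are $\mathfrak{m}$-primary so either may play the distinguished role in Corollary 3.15, and $\mathfrak{m}$-primary ideals are nonzero so Proposition 3.18 applies --- is all sound. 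One citation slip: the non-Burch criterion you call ``Proposition 3.26'' is Proposition 3.25 in the paper (the statement you are proving is its Corollary 3.26), though the content you quote is the right one.
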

\begin{proof}
We prove (1). By Proposition 3.25, we have $I$ is not Burch. Therefore, $I$ and $J$ are Elias by Corollary 3.24. If $k$ is infinite, then $I$ is not Ulrich by Proposition 3.18.
\end{proof}\smallskip\smallskip
\begin{proposition}
Let $(R,\mathfrak{m})$ be a local ring. Let $I$, $J$, and $K$ be ideals of $R$ such that $I=(K: J)$, $J=(K: I)$, and $I\mathfrak{m}=J\mathfrak{m}$. Then $I$ is Burch if and only if $J$ is Burch.
\end{proposition}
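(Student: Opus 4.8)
The plan is to prove the stronger equality $(I:\mathfrak{m}) = (J:\mathfrak{m})$; once this is in hand, the equivalence is immediate. Note first that the hypothesis $I\mathfrak{m} = J\mathfrak{m}$ is symmetric and gives $\mathfrak{m}I = \mathfrak{m}J$, so by Definition 3.16 the claim that $I$ is Burch if and only if $J$ is Burch reduces to showing $\mathfrak{m}(I:\mathfrak{m}) = \mathfrak{m}(J:\mathfrak{m})$.

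The key computation uses the elementary colon identity $((A:B):C) = (A:BC)$ for ideals of $R$. Applying it to the relation $I = (K:J)$ gives
\[
(I:\mathfrak{m}) = ((K:J):\mathfrak{m}) = (K:J\mathfrak{m}),
\]
and applying it to the relation $J = (K:I)$ gives
\[
(J:\mathfrak{m}) = ((K:I):\mathfrak{m}) = (K:I\mathfrak{m}).
\]
Since $I\mathfrak{m} = J\mathfrak{m}$ by hypothesis, the two right-hand sides coincide, so in fact $(I:\mathfrak{m}) = (J:\mathfrak{m})$.

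Multiplying through by $\mathfrak{m}$ yields $\mathfrak{m}(I:\mathfrak{m}) = \mathfrak{m}(J:\mathfrak{m})$. Combined with $\mathfrak{m}I = \mathfrak{m}J$, this shows that $\mathfrak{m}I = \mathfrak{m}(I:\mathfrak{m})$ holds if and only if $\mathfrak{m}J = \mathfrak{m}(J:\mathfrak{m})$ holds, which is exactly the assertion that $I$ is Burch precisely when $J$ is Burch. There is no genuine obstacle in this argument: the whole proof rests on the colon identity, and the only point requiring (routine) verification is the equality $((K:J):\mathfrak{m}) = (K:J\mathfrak{m})$, which follows directly from unwinding the definition of the colon operation, since $r \in ((K:J):\mathfrak{m})$ means $r\mathfrak{m}J \subseteq K$, i.e. $r \in (K:J\mathfrak{m})$.
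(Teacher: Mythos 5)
Your proof is correct and follows essentially the same route as the paper: both arguments use the colon identity $((K:J):\mathfrak{m})=(K:J\mathfrak{m})$ together with the hypothesis $I\mathfrak{m}=J\mathfrak{m}$ to establish $(I:\mathfrak{m})=(J:\mathfrak{m})$, and then compare $\mathfrak{m}I=\mathfrak{m}J$ against $\mathfrak{m}(I:\mathfrak{m})=\mathfrak{m}(J:\mathfrak{m})$. Your presentation is marginally tidier in that it states the equality $(I:\mathfrak{m})=(J:\mathfrak{m})$ once and deduces both directions simultaneously, whereas the paper spells out the two implications separately, but the underlying argument is identical.
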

\begin{proof}
We have the following.
\[
(J:\mathfrak{m})=((K:I):\mathfrak{m})=(K:I\mathfrak{m})=(K:J\mathfrak{m})
\] and 
\[
(K:J\mathfrak{m})=((K:J):\mathfrak{m})=(I:\mathfrak{m}).
\] It follows that $\mathfrak{m}(I:\mathfrak{m})=\mathfrak{m}(J:\mathfrak{m})$. Suppose $I$ is not Burch. Then we have 
\[
\mathfrak{m}J \subseteq \mathfrak{m}(J:\mathfrak{m})=\mathfrak{m}(I:\mathfrak{m})=\mathfrak{m}I=\mathfrak{m}J,
\] so $\mathfrak{m}J=\mathfrak{m}(J:\mathfrak{m})$, and $J$ is not Burch. Conversely, if $J$ is not Burch, we have 
\[
\mathfrak{m}J=\mathfrak{m}I \subseteq \mathfrak{m}(I:\mathfrak{m})=\mathfrak{m}(J:\mathfrak{m})=\mathfrak{m}J,
\] so $\mathfrak{m}I=\mathfrak{m}(I:\mathfrak{m})$ and $I$ is not Burch.
\end{proof}

\section*{Acknowledgements} I would like to thank Michael DeBellevue and Souvik Dey for many helpful conversations and feedback on results in this paper. I would also like to thank Michael DeBellevue for his question which led to Corollary 3.8 (2). Finally, I would like to thank Graham Leuschke for his helpful suggestions for this paper, including Question 3.9.
 
\begin{bibdiv}
    \begin{biblist}
\bib{}{article}
{
title = {Generalized Loewy length of Cohen-Macaulay local and graded rings},
journal = {Journal of Algebra},
volume = {653},
pages = {28-41},
year = {2024},
issn = {0021-8693},
doi = {https://doi.org/10.1016/j.jalgebra.2024.05.003},
url = {https://www.sciencedirect.com/science/article/pii/S0021869324002308},
author = {Richard F. Bartels}
}

\bib{}{book}{place={Cambridge}, edition={2}, series={Cambridge Studies in Advanced Mathematics}, title={Cohen-Macaulay Rings}, publisher={Cambridge University Press}, author={Bruns, Winfried and Herzog, H. Jürgen}, year={1998}, collection={Cambridge Studies in Advanced Mathematics}}

\bib{}{article}
{
  title={Two theorems on the vanishing of Ext},
  author={Olgur Celikbas and Souvik Dey and Toshinori Kobayashi and Hiroki Matsui and A Sadeghi},
  year={2023},
  url={https://api.semanticscholar.org/CorpusID:261030969}
}

\bib{}{article}
{
author = {Dao, Hailong},
year = {2023},
month = {01},
pages = {},
title = {Elias Ideals},
doi = {10.48550/arXiv.2301.00569}
}

\bib{}{article}{title={On colon operations and special types of ideals},
  author={Dao, Hailong},
  journal={arXiv: Commutative Algebra},
  year={2020},
  url={https://api.semanticscholar.org/CorpusID:218581379}
}

\label{2}\bib{}{article} {title={Burch ideals and Burch rings},
  author={Hailong Dao and Toshinori Kobayashi and Ryo Takahashi},
  journal={Algebra \& Number Theory},
  year={2019},
  url={https://api.semanticscholar.org/CorpusID:146808117}
}

\bib{}{article}{title = {A counterexample to a conjecture of Ding},
journal = {Journal of Algebra},
volume = {452},
pages = {324-337},
year = {2016},
issn = {0021-8693},
doi = {https://doi.org/10.1016/j.jalgebra.2016.01.015},
url = {https://www.sciencedirect.com/science/article/pii/S0021869316000399},
author = {De Stefani, Alessandro},
}

\bib{}{article}{
author = {Souvik Dey and Toshinori Kobayashi},
title = {{Vanishing of (co)homology of Burch and related submodules}},
volume = {67},
journal = {Illinois Journal of Mathematics},
number = {1},
publisher = {Duke University Press},
pages = {101 -- 151},
year = {2023},
doi = {10.1215/00192082-10429128},
URL = {https://doi.org/10.1215/00192082-10429128}
}

\bib{}{article}
{
author = {Ding, Songqing},
title = {A note on the index of cohen-macaulay local rings},
journal = {Communications in Algebra},
volume = {21},
number = {1},
pages = {53--71},
year = {1993},
publisher = {Taylor \& Francis},
doi = {10.1080/00927879208824550}
}

\bib{}{article}{
Author = {Ding, Songqing},
Coden = {JALGA4},
Date-Added = {2012-08-10 00:17:34 +0000},
Date-Modified = {2012-08-10 00:17:35 +0000},
Fjournal = {Journal of Algebra},
Issn = {0021-8693},
Journal = {J. Algebra},
Mrclass = {13H10 (13C14)},
Mrnumber = {1198202},
Mrreviewer = {J. K. Verma},
Number = {2},
Pages = {271--288},
Title = {Cohen-{M}acaulay approximation and multiplicity},
Volume = {153},
Year = {1992}}

\bib{}{article}{
	Author = {Ding, Songqing},
	Coden = {PAMYAR},
	Date-Added = {2012-08-10 00:17:34 +0000},
	Date-Modified = {2012-08-10 00:17:35 +0000},
	Fjournal = {Proceedings of the American Mathematical Society},
	Issn = {0002-9939},
	Journal = {Proc. Amer. Math. Soc.},
	Mrclass = {13H10 (13A30 13C14)},
	Mrnumber = {1181160},
	Mrreviewer = {J{\"u}rgen Herzog},
	Number = {4},
	Pages = {1029--1033},
	Title = {The associated graded ring and the index of a {G}orenstein local ring},
	Volume = {120},
	Year = {1994}}

\bib{}{article}{title = {Homological dimensions of Burch ideals, submodules and quotients},
journal = {Journal of Pure and Applied Algebra},
volume = {228},
number = {7},
pages = {107647},
year = {2024},
issn = {0022-4049},
doi = {https://doi.org/10.1016/j.jpaa.2024.107647},
url = {https://www.sciencedirect.com/science/article/pii/S0022404924000446},
author = {Dipankar Ghosh and Aniruddha Saha}
}

\bib{}{article}{
  title={The trace of the canonical module},
  author={J{\"u}rgen Herzog and Takayuki Hibi and Dumitru I. Stamate},
  journal={Israel Journal of Mathematics},
  year={2016},
  volume={233},
  pages={133 - 165},
  url={https://api.semanticscholar.org/CorpusID:119668094}
}

\bib{}{book}
{
  title={Integral Closure of Ideals, Rings, and Modules},
  author={Huneke, C}, author={Swanson, I.},
  number={v. 13},
  isbn={9780521688604},
  lccn={2007295090},
  series={Integral closure of ideals, rings, and modules},
  url={https://books.google.com/books?id=APPtnn84FMIC},
  year={2006},
  publisher={Cambridge University Press}
}

\bib{}{book}{,
	Author = {Leuschke, Graham J.},
        Author = {Wiegand, Roger},
	Isbn = {978-0-8218-7581-0},
	Mrclass = {13C14 (13H10 16G10)},
	Mrnumber = {2919145},
	Mrreviewer = {Geoffrey D. Dietz},
	Note = {xviii+367 pages. ISBN: 978-0-8218-7581-0},
	Pages = {xviii+367},
	Publisher = {American Mathematical Society, Providence, RI},
	Series = {Mathematical Surveys and Monographs},
	Title = {Cohen-{M}acaulay representations},
	Url = {http://www.leuschke.org/research/MCMBook},
	Volume = {181},
	Year = {2012},
	Bdsk-Url-1 = {http://www.leuschke.org/research/MCMBook}}

\bib{}{book}{Address = {Cambridge},
	Author = {Matsumura, Hideyuki},
	Date-Added = {2012-08-10 00:17:34 +0000},
	Date-Modified = {2012-08-10 00:17:35 +0000},
	Edition = {Second},
	Isbn = {0-521-36764-6},
	Mrclass = {13-01},
	Mrnumber = {1011461},
	Note = {Translated from the Japanese by M. Reid},
	Pages = {xiv+320},
	Publisher = {Cambridge University Press},
	Series = {Cambridge Studies in Advanced Mathematics},
	Title = {Commutative ring theory},
	Volume = {8},
	Year = {1989}}

\bib{}{book}
{
    AUTHOR = {Nagata, Masayoshi},
    TITLE = {Local rings},
    SERIES = {Interscience Tracts in Pure and Applied Mathematics, No. 13},
    PUBLISHER = {Interscience Publishers a division of John Wiley \& Sons,\, New York-London},
    year = {1962}
}

 \bib{}{book}{
	Address = {New York},
	Author = {Zariski, Oscar},
        Author = {Samuel, Pierre}
	Date-Added = {2012-08-10 00:17:34 +0000},
	Date-Modified = {2012-08-10 00:17:35 +0000},
	Mrclass = {13-02 (12JXX 14-02)},
	Mrnumber = {0389876},
	Note = {Reprint of the 1960 edition, Graduate Texts in Mathematics, Vol. 29},
	Pages = {x+414},
	Publisher = {Springer-Verlag},
	Title = {Commutative algebra. {V}ol. {II}},
	Year = {1975}}

    \end{biblist}
\end{bibdiv}

\bibliographystyle{amsplain}
}
\end{document}